\newtheorem*{thm*}{\protect\theoremname}
\newtheorem{thm}{\protect\theoremname}
\newtheorem{conjecture}{Conjecture}
\newtheorem{lem}[thm]{\protect\lemmaname}
\newtheorem{cor}[thm]{\protect\corollaryname}
\newtheorem{prop}[thm]{\protect\propositionname}
\theoremstyle{definition}
\newtheorem{defn}[thm]{\protect\definitionname}
\theoremstyle{remark}
\newtheorem{rem}[thm]{\protect\remarkname}
\providecommand{\corollaryname}{Corollary}
\providecommand{\definitionname}{Definition}
\providecommand{\examplename}{Example}
\providecommand{\lemmaname}{Lemma}
\providecommand{\propositionname}{Proposition}
\providecommand{\remarkname}{Remark}
\providecommand{\theoremname}{Theorem}
\providecommand{\claimname}{Claim}
\DeclareMathOperator{\NP}{NP}
\DeclareMathOperator{\lcm}{lcm}
\lstdefinelanguage{Magma}%
  {%
   otherkeywords={:=,+:=,-:=,*:=},%
   procnamekeys={function,func,intrinsic,procedure,proc},%
   morekeywords={true,false},%
   morekeywords=[2]{adj,and,cat,cmpeq,cmpne,diff,div,eq,ge,gt,in,is,join,le,lt,%
          meet,mod,ne,notadj,notin,notsubset,or,sdiff,subset,xor},%
   morekeywords=[3]{assigned,break,by,case,catch,continue,declare,default,%
          delete,do,elif,else,end,eval,exists,exit,for,forall,fprintf,if,local,%
          not,print,printf,quit,random,read,readi,repeat,restore,save,select,%
          then,time,to,try,until,vprint,vprintf,vtime,when,where,while},%
   morekeywords=[4]{clear,forward,freeze,iload,import,load},%
   morekeywords=[5]{assert,assert2,assert3,error,require,requirege,requirerange},%
   morekeywords=[6]{car,comp,cop,elt,ext,frac,hom,ideal,iso,lideal,loc,map,%
          ncl,pmap,quo,rec,recformat,rep,rideal,sub},%
   morekeywords=[7]{AbelianGroup,AdditiveCode,AffineAlgebra,Algebra,%
          AssociativeAlgebra,Character,CliffordAlgebra,Design,Digraph,%
          ExtensionField,FPAlgebra,FiniteAffinePlane,FiniteProjectivePlane,%
          Graph,Group,GroupAlgebra,IncidenceStructure,LieAlgebra,LinearCode,%
          LinearSpace,MatrixAlgebra,MatrixGroup,MatrixRing,Monoid,%
          MultiDigraph,MultiGraph,NearLinearSpace,Network,PartialMap,%
          PermutationGroup,PolycyclicGroup,QuaternionAlgebra,Semigroup,%
          ZModule},%
   morekeywords={[8]function,func,intrinsic,procedure,proc,return},%
      sensitive,%
      morecomment=[l]//,%
      morecomment=[s]{/*}{*/},%
      morecomment=[s]{\{}{\}},%
      morestring=[b]"%
  }[keywords,procnames,comments,strings]%
\newcommand{\QQ}{\mathbb{Q}}
\newcommand{\ZZ}{\mathbb{Z}}
\newcommand{\NN}{\mathbb{N}}
\begin{document}

\title[]{On the Galois Theory of Generalized Laguerre Polynomials and Trimmed Exponential}
\author{Lior Bary-Soroker}
\author{Or Ben-Porath}
\maketitle

\begin{abstract}
Inspired by the work of Schur on the Taylor series of the exponential and Laguerre polynomials, we study the Galois theory of trimmed exponentials $f_{n,n+k}=\sum_{i=0}^{k} \frac{x^{i}}{(n+i)!}$ and of the generalized Laguerre polynomials 
$L^{(n)}_k$ of degree $k$. 
We show that if $n$ is chosen uniformly from $\{1,\ldots, x\}$, then, asymptotically almost surely,  for all $k\leq x^{o(1)}$ the Galois groups of $f_{n,n+k}$ and of $L_{k}^{(n)}$ are the full symmetric group $S_k$.
\end{abstract}

\section{Introduction}
To a polynomial $f(x) = \sum_{j=0}^k b_j x^j$ with complex roots $\alpha_1,\ldots, \alpha_k$ we attach the Galois group $G_f$, which is defined as the automorphism group of $\QQ(\alpha_1,\ldots, \alpha_k)/\QQ$. It acts faithfully on the roots, and this action induces an embedding $G_f\leq S_k$. Already Galois showed that arithmetic properties of the $\alpha_i$-s are encoded in $G_f$. Therefore, in applications it is often times crucial to compute the Galois group of a specific polynomial or a family of polynomials. 

We call the problem of computing the Galois group of a polynomial or a family of polynomials \emph{the direct Galois problem}. 
Although there are algorithms to compute the Galois group of a given polynomial (see the survey paper \cite{hulpke1999techniques}), it is typically a challange  to compute the Galois group of a specific family of polynomials. For example, consider the family of the generalized Fibonacci polynomials $x^k-x^{k-1} -\cdots -1$. The construction of the Arnoux-Yoccoz surfaces \cite{ArnouxYoccoz81} uses the irreducibility of these polynomials, and further studies, cf.\ \cite{hooper2018rel}, need the more subtle information that the field $E/\QQ$ generated by a root has no totally real subextensions. If $k$ is even, the Galois group is $S_k$ \cite{martin2004galois} hence $E$ is minimal. If $k$ is odd, the Galois group is also conjectured to be $S_k$, but it is still open in general. Nevertheless, the required property for the application is proved by the first author, Shusterman and Zannier \cite[Appendix]{hooper2018rel}. 

The direct Galois problem was studied from a probabilistic approach. Roughly speaking, the Galois group tends to be the "largest possible".  Maybe the first result in this spirit goes back to van-der Waerden \cite{Waerden} who proved that the probability of a random uniform  polynomial of degree $k$ and integral coefficients $|a_i|\leq H$ to have Galois group $S_k$ tends to $1$ as $H$ tends to infinity. See \cite{Gallagher, Dietmann} for better bounds on the error term. Recently, the case when $H$ is fixed (e.g.\ $H\geq 35$) and the degree tends to infinity was established \cite{bary2020irreducible, breuillard2019irreducibility, bary2020irreducibility}.

One can approach the direct Galois problem in a more explicit manner and there is a line of works on polynomials coming from analytic functions. 
Let $e_k(x) = \sum_{j=0}^k \frac{x^k}{j!}$ be the Taylor polynomial of degree $k$ of the exponential function. 
In 1930, Schur \cite{schur1930gleichungen} computed the Galois group of $e_k$:
\[
    G_{e_k}=
    \begin{cases}
        A_{k} & k\equiv0(4), \\
        S_{k} & \textnormal{otherwise}.
    \end{cases}
\]
One may trim the Taylor series of $e^x$ on both sides. After normalization, the trimmed exponential has the form
\[
    f_{n,m} (x) = \sum_{j=n}^m \frac{x^{j-n}}{j!} , \qquad m>n.
\]
When $n>1$ it seems that the discriminant is never a square, hence the Galois group should not be the alternating group. 
Therefore one may pose the following 
\begin{conjecture}
    For every $m>n\geq 1$ the polynomial $f_{n,m}$ is irreducible and moreover
    \[
        G_{f_{n,m}} = S_{m-n}.
    \]
\end{conjecture}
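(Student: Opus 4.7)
The plan is to adapt Schur's approach for $e_{k}$ to the trimmed polynomial. Clearing denominators, one works with the monic integer polynomial
\[
    g_{n,k}(x) := (n+k)!\,f_{n,n+k}(x) = \sum_{i=0}^{k} a_{i}\,x^{i}, \qquad a_{i} = \frac{(n+k)!}{(n+i)!},
\]
whose Galois group coincides with $G_{f_{n,n+k}}$. By Legendre's formula, $v_{p}(a_{i}) = v_{p}((n+k)!) - v_{p}((n+i)!)$, so the Newton polygon of $g_{n,k}$ at a prime $p$ is entirely governed by the factorization of $n+1, n+2, \ldots, n+k$. The main technical input is Newton polygon analysis at a varying prime $p$, combined with Jordan's classical theorem that a transitive subgroup of $S_{k}$ containing a cycle of prime length $\ell$ with $k/2 < \ell \leq k-3$ contains $A_{k}$.

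For a prime $p = n+j$ with $1 \leq j \leq k$ satisfying $p^{2} > n+k$ (which holds automatically once $j \geq k/2$), Legendre's formula yields $v_{p}(a_{i}) = 1$ for $0 \leq i < j$ and $v_{p}(a_{i}) = 0$ for $j \leq i \leq k$. The lower convex hull then consists of a segment from $(0,1)$ to $(j,0)$ of slope $-1/j$ in lowest terms, followed by a horizontal segment. The first segment corresponds to a totally, tamely ramified irreducible factor of $g_{n,k}$ over $\QQ_{p}$ of degree $j$. A short computation reduces the horizontal segment, up to a constant, to the Taylor polynomial $e_{k-j}(x) \pmod{p}$, which for generic $p$ is separable and therefore unramified; in that case the inertia acts trivially on the remaining $k-j$ roots and yields an honest $j$-cycle in $G_{g_{n,k}}$ whenever $n+j$ is prime.

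With this in hand, the proof proceeds in three steps. First, \emph{irreducibility}: taking $j = k$, if $n+k$ is prime then the Newton polygon at $p = n+k$ is a single segment of slope $-1/k$, so $g_{n,k}$ is already irreducible over $\QQ_{p}$, and a $k$-cycle in $G_{g_{n,k}}$ emerges for free. When $n+k$ is composite, irreducibility must be obtained by combining Newton polygon data at several primes. Second, \emph{producing the Jordan cycle}: find a prime $\ell$ with $k/2 < \ell \leq k-3$ such that $n+\ell$ is also prime; the Newton polygon at $p = n+\ell$ yields an $\ell$-cycle, and Jordan's theorem then forces $G_{g_{n,k}} \supseteq A_{k}$. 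Third, \emph{ruling out $A_{k}$}: compute $\mathrm{disc}(g_{n,k})$ from its explicit product form and show that for $n \geq 1$ it is never a square, for instance via a $p$-adic estimate at a carefully chosen prime.

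The main obstacle is the second step. One needs a prime $\ell$ in $(k/2,\, k-3]$ with $n+\ell$ also prime---in effect a prime pair in a short window around $n+k$. For $n$ comparable to or larger than $k$ the ambient window has length $\approx k/2 \ll n$, so Bertrand-type arguments give nothing and unconditional existence seems to lie beyond current analytic number theory. This is precisely why the paper's unconditional result is phrased probabilistically: for a density-one set of $n$ the required primes exist and the argument goes through, whereas a uniform-in-$(n,k)$ statement appears to require substantial progress on short-interval prime distribution. A secondary obstacle is irreducibility in the regime where no prime $n+j$ produces a one-segment Newton polygon; this can in principle be handled by juggling valuations at several primes, but becomes combinatorially delicate.
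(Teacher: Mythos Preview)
The statement you are attempting is labelled \emph{Conjecture} in the paper, and the paper does not prove it; it only establishes the probabilistic Theorems~\ref{MAIN-thm-Sk-probability} and~\ref{Main-thm-Laguerre} as partial evidence. So there is no ``paper's own proof'' to compare against, and your proposal is, appropriately, not a proof either---you yourself identify the main obstruction in the second step as lying beyond current analytic number theory. That diagnosis is correct and honest.

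That said, your approach is more restrictive than necessary, and this matters if one wants to recover even the probabilistic theorems. You repeatedly require numbers of the form $n+j$ to be \emph{prime}: for irreducibility you want $n+k$ prime, and for the Jordan cycle you want $n+\ell$ prime for some prime $\ell\in(k/2,k-3]$. The paper's mechanism is weaker and hence applies more often. For irreducibility it suffices that $m=n+k$ have some prime factor $p>k$ with $v_p(m)=1$; then $\NP^{(p)}(m!f_{n,m})$ is already a single segment of slope $-1/k$. For the $p$-cycle with $k/2<p<k-2$, the paper does not ask that any $n+j$ be prime: it takes the prime $p$ itself (guaranteed in $(k/2,k/2+h)$ by Baker--Harman--Pintz) and asks only that the interval $[n,m]$ contain two multiples of $p$, the larger with $v_p=1$; alternatively, for small $k$, that $m-i$ have a prime factor $q>k$ with $v_q(m-i)=1$ for the relevant $i$. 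These are density-one conditions on $m$, whereas ``$n+\ell$ is prime'' is a density-zero condition. Finally, for ruling out $A_k$ the paper does not compute the discriminant but instead produces an even-length cycle (a $2\lfloor k/2\rfloor$-cycle) by the same Newton-polygon mechanism, which is again a density-one condition; your discriminant route would require a separate argument that $\mathrm{disc}(g_{n,k})$ is never a square, which the paper explicitly leaves open (``it seems that the discriminant is never a square'').

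In short: your outline captures the right ingredients (Newton polygons, Jordan's theorem), correctly flags why the full conjecture is out of reach, but the specific hypotheses you impose are stronger than what the paper uses to get its probabilistic results.
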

Numerical experiments suggest that this conjecture is valid. 
The goal of this paper is to combine the probabilistic approach with the explicit approach to get partial results and thus to provide  evidence to this conjecture. Our main result is that when $m$ is chosen at random, then the conjecture holds with high probability for all $n$ which is relatively close to $m$:
\begin{thm}
\label{MAIN-thm-Sk-probability}
Choose $m$ uniformly at random in $\{ 1,\ldots,x\}$. Let $k_{\max}=k_{\max}(x)=x^{o(1)}$. Then
\[
    \displaystyle{\lim_{x\to\infty}}\mathbb{P}\left(\forall\  8\leq k\leq k_{\max}: G_{f_{m-k,m}}=S_{k}\right)=1.
\]
\end{thm}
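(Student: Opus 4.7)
The plan is to combine a Schur--Coleman style Newton-polygon analysis of $f_{n,n+k}$ (with $n=m-k$) at primes $p\in(n,n+k]$ with a sieve over $m$. Writing $a_i=1/(n+i)!$ and using that, for typical $m\le x$ and $k\le k_{\max}=x^{o(1)}$, any such prime satisfies $p^2>n+k$, the $p$-adic valuation $v_p(a_i)=-v_p((n+i)!)$ drops from $0$ to $-1$ exactly at $i=j_0:=p-n\in[1,k]$. Hence the Newton polygon of $f_{n,n+k}$ at $p$ has two segments, of slopes $-1/j_0$ (length $j_0$) and $0$ (length $k-j_0$); the first is totally ramified of degree $j_0$, so the inertia at $p$ contributes an element of $G_{f_{n,n+k}}$ which is a $j_0$-cycle on $j_0$ of the $k$ roots and fixes the rest.

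\medskip

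This gives a deterministic criterion on $m$ for each $k$: if one can find a prime $p_1\in(n,n+k]$ whose $j_0(p_1)$ is prime with $k/2<j_0(p_1)\le k-3$, a second prime $p_2\in(n,n+k]$ with $\lcm(j_0(p_1),j_0(p_2))$ large enough to force the irreducibility of $f_{n,n+k}$ (hence the transitivity of $G_{f_{n,n+k}}$), and the discriminant of $f_{n,n+k}$ is not a rational square, then one concludes $G_{f_{n,n+k}}=S_k$. Indeed, the standard Marggraf-type lemma (transitive plus a prime cycle of length $>k/2$ implies primitive), combined with Jordan's theorem (primitive plus a $p$-cycle with $p\le k-3$ prime implies $\supseteq A_k$), upgrades the $j_0(p_1)$-cycle to the conclusion $A_k\le G_{f_{n,n+k}}$, and the non-square discriminant rules out the alternating case. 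The discriminant admits a closed form expressible via values of generalized Laguerre polynomials and can be analyzed modulo an auxiliary small prime.

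\medskip

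The probabilistic step bounds the density of $m\in\{1,\ldots,x\}$ for which this deterministic criterion fails for some $8\le k\le k_{\max}$. For each fixed $k$, the required joint configuration --- two primes in $(m-k,m]$ whose positions $j_0$ fall in prescribed prime ranges, together with a non-square discriminant condition --- fails for at most $o(x/k_{\max})$ values of $m$, via standard tools from analytic number theory: Hoheisel/Huxley-type results on primes in short intervals of length $\gg k$, the Bombieri--Vinogradov theorem to control the residues of these primes modulo small moduli governing the discriminant, and inclusion--exclusion to disentangle the joint events. A union bound over the $k_{\max}=x^{o(1)}$ values of $k$ then yields the theorem.

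\medskip

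The main obstacle is uniformity in $k$. For small $k$ near the boundary $k=8$, the window $(k/2,k-3]$ of allowed values of $j_0$ is narrow and contains few primes (for $k=7$ it contains none, which is precisely why the statement begins at $k=8$), so the deterministic condition is brittle and one has little slack in the sieve. For large $k\asymp k_{\max}$, the interval $(m-k,m]$ is longer and contains more primes, but one must control the \emph{joint} distribution of several such primes --- their $j_0$-values being prime, lying in the correct subinterval, and having compatible residues for the discriminant condition --- and the per-$k$ savings in failure probability must be strong enough to absorb the $x^{o(1)}$-size union bound. Balancing a flexible enough deterministic criterion against quantitative enough sieve estimates, uniformly across the entire range of $k$, is the technical heart of the argument.
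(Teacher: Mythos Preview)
Your Newton-polygon analysis is set up at primes $p\in(n,n+k]=(m-k,m]$, and this is where the argument breaks down. For bounded $k$ the expected number of primes in an interval of length $k$ near $m$ is $O(k/\log m)\to 0$; in particular, for $k=8$ your criterion forces $j_0=5$, i.e.\ $p=m-3$ must itself be prime, which happens with density $\sim 1/\log m$. Thus for almost every $m\le x$ your deterministic input is empty at every small $k$, and no sieve or union bound can repair a per-$k$ failure probability of $1-o(1)$. The secondary ingredients are also shaky: two cycles of lengths $j_0(p_1),j_0(p_2)$ do not force transitivity via an $\lcm$ condition alone, and $f_{m-k,m}$ (unlike $L_k^{(\alpha)}$) has no clean closed-form discriminant to analyze modulo a small prime.

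The paper uses a different kind of prime. For irreducibility and for the odd-permutation step it takes a prime $q>k$ with $v_q(m)=1$ (respectively $v_q(m-1)=1$); such a $q$ exists unless $m$ is $k$-smooth, and the bound $\psi(x,k)\ll x\log k/\log x$ gives failure probability $o(1)$ when $k=x^{o(1)}$. At this $q$ the Newton polygon of $m!f_{m-k,m}$ is a single segment of slope $-1/k$ (respectively contains one of slope $-1/(k-1)$), producing a $k$-cycle (respectively an even cycle) directly --- no $\lcm$ patching and no discriminant formula. For the containment $G\geq A_k$ the relevant prime is a \emph{small} one, $k/2<p<k-2$: for large $k$ one uses Baker--Harman--Pintz to place such a $p$ with two multiples in $[m-k,m]$; for bounded $k$ one again uses a large prime factor $q$ of $m-i$ with $i=k-p$, whose Newton polygon then has a segment of length exactly $p$. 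The idea you are missing is that the prime at which one reads the Newton polygon need not lie in $(m-k,m]$; any $q>k$ dividing some $m-i$ exactly once creates a segment of length $k-i$, and the existence of such $q$ is governed by smoothness of $m-i$, not by the primality of integers near $m$.
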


Our approach fits in a broader point of view.
The \emph{generalized Laguerre polynomial} $L_k^{(\alpha)}$ of degree $k$ and parameter $\alpha$ is defined as the polynomial solution of the ordinary differential equation
\[
    xy''+(\alpha+1-x)y'+ky=0.
\]
It has the closed form
\[
    L_k^{(\alpha)}(x)= \sum_{j=0}^{k}\frac{(k+\alpha)(k-1+\alpha)\cdots(j+1+\alpha)}{(k-j)!j!}(-x)^{j}.
\]
The family of generalized Laguerre polynomials includes the exponential Taylor polynomials  $e_k(x)=L_k^{(-k-1)}$ and the Laguerre polynomials when $\alpha =0$. When $\alpha = -2k-1$,  we  get the reverse Bessel polynomials and when $\alpha =\pm 1/2$ the Hermite polynomials (up to normalization).

In 1929, Schur \cite{schur1929einige}  proved the irreducibility of $e_k=L_k^{(-k-1)}$ and $L_k^{(0)}$ and in \cite{schur1930gleichungen} he computed the corresponding Galois groups: for $e_k = L_k^{(-k-1)}$ it was discussed above, and $L_k^{(0)}$ has the full symmetric group. 

A year later, he \cite{schur1931affektlose}  proved that $L_{k}^{(1)}$ 
has Galois group
\[
G_{L_k^{(1)}}=\begin{cases}
A_{k} & k\textnormal{ is odd or }k+1\textnormal{ is an odd square}, \\
S_{k} & \textnormal{otherwise}
\end{cases}
\]
and that $L_k^{(\pm 1/2)}$ has Galois group $S_k$ for $k>12$. Presumably Schur's motivation was to give explicit examples of polynomials with Galois group $A_k$, following Hilbert's realization of $A_k$ using specialization methods.

Following Schur, $L_k^{(\alpha)}$ was studied in many ranges and special values of $\alpha$. In 1978, Grosswald \cite{grosswald1978bessel} showed that the Galois group of the reverse Bessel polynomials $L_k^{(-2k-1)}$ is the symmetric group, under the condition that they are irreducible. In 2002, Filaseta and Trifonov \cite{filaseta2002irreducibility2} proved the irreducibility. 

Also in 2002, Filaseta and Lam \cite{filaseta2002irreducibility1} showed that $L_k^{(\alpha)}$ is irreducible for all but finitely many $k$-s, for any choice of rational number $\alpha$ that is not a negative integer. In 2005, Hajir \cite{hajir2005galois} showed that in this case 
$G_{L_k^{(\alpha)}}\geq A_{k}$. There are further recent results on these families, see for example \cite{filaseta2012laguerre,hajir2009algebraic}.

In these results $k$ is large w.r.t.\ $\alpha$. We prove a result for small degrees. Our result is valid for almost all $\alpha$ in the probabilistic sense and for all small $k$: 

\begin{thm}\label{Main-thm-Laguerre}
    Choose $\alpha\in \{1,\ldots x\}$ uniformly at random and let $k_{\max} = x^{o(1)}$. Then
    \[
        \lim_{x\to \infty}\mathbb{P}(\forall 8\leq k\leq k_{\max}, \ L_{k}^{(\alpha)} \textnormal{ is irreducible with Galois group $S_k$}) =1.
    \]
\end{thm}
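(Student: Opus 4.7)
The plan is to mirror the argument behind Theorem~\ref{MAIN-thm-Sk-probability} for trimmed exponentials. Writing
\[
    L_k^{(\alpha)}(x)=\sum_{j=0}^{k}\binom{k+\alpha}{k-j}\frac{(-x)^{j}}{j!}=\sum_{j=0}^{k}\frac{(-1)^{j}\prod_{i=j+1}^{k}(\alpha+i)}{(k-j)!\,j!}x^{j},
\]
the $j$-th coefficient $a_j$ has $p$-adic valuation
\[
    v_p(a_j)=\sum_{i=j+1}^{k}v_p(\alpha+i)-v_p(j!(k-j)!),
\]
so the Newton polygon of $L_{k}^{(\alpha)}$ at any prime $p$ is determined purely by the $p$-adic data of the window $\{\alpha+1,\ldots,\alpha+k\}$. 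This makes the Laguerre and trimmed-exponential settings structurally analogous, since $f_{n,n+k}$ has coefficients $1/(n+i)!$ whose $p$-adic valuations are also a function of a shifted window.

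The concrete strategy is to produce, for a random $\alpha$ and every admissible $k$, a \emph{good} prime $q>k$ dividing exactly one shift $\alpha+i$ in the window, with $v_q(\alpha+i)=1$. Because $q>k$ forces $v_q(j!)=0$, the Newton polygon at $q$ then consists of a single sloped segment from $(0,1)$ to $(i,0)$ of slope $-1/i$, together with a horizontal piece of length $k-i$. The sloped segment corresponds to a totally tamely ramified factor over $\QQ_{q}$ of degree $i$, whose decomposition group contributes an $i$-cycle to $G_{L_{k}^{(\alpha)}}$. Imposing the additional constraint that $i$ is a prime in $(k/2,k-2)$ (a congruence condition on $\alpha$ modulo $q$) makes the group primitive once it is transitive, and then Jordan's theorem delivers $G_{L_{k}^{(\alpha)}}\supseteq A_{k}$. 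Irreducibility itself can be obtained by a similar polygon at a second prime (or imported from the Filaseta--Lam theorem applied to the measure-one set of admissible~$\alpha$) and combined with the prime-length cycle to promote transitivity to an action on all $k$ roots.

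The probabilistic input is a sieve estimate: for fixed $k$ the density of $\alpha\in\{1,\ldots,x\}$ whose window fails to contain such a pair $(q,i)$ is bounded by $x^{-\delta}$ for some $\delta>0$, since one is asking for a large prime factor at a specified prime slot of a $k$-element interval, a standard sieve question. As $k_{\max}=x^{o(1)}$, a union bound over $k$ keeps the total failure probability $o(1)$. To exclude the alternating group and upgrade $A_{k}$ to $S_{k}$, the same machinery is run once more to locate a second witness prime $q'$ whose Newton polygon at $\alpha$ produces an odd permutation---for instance, a segment of length two at one end of the polygon forces a transposition in the decomposition group at $q'$---and this additional congruence constraint only costs a constant factor in the sieve.

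The main obstacle will be controlling the sieve uniformly across $8\le k\le k_{\max}$, so that the per-$k$ failure probability decays fast enough in $x$ to survive the union bound as $k$ grows; this is where one has to track the sieve dimension and the density of primes in the prescribed windows carefully. Once that uniformity is in place, the rest is a routine translation of the Schur--Coleman--Hajir Newton-polygon machinery, applied in parallel to the argument already used for Theorem~\ref{MAIN-thm-Sk-probability}.
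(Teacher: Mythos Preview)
What you have written is a plan, not a proof: you explicitly flag ``the main obstacle'' (uniform sieve control) without resolving it, and several steps are only gestured at. More importantly, you miss the structural shortcut that reduces this theorem to a few lines once Theorem~\ref{MAIN-thm-Sk-probability} is in hand. Propositions~\ref{MAIN-cor-irreducibility-probability}, \ref{MAIN-prop-not-Ak-probability}, and \ref{MAIN-prop-Ak-probability} are deliberately stated for \emph{every} $f\in[f_{m-k,m}]$ satisfying mild non-divisibility conditions on the auxiliary integers $a_j$. Since
\[
\frac{k!}{m!}\,L_k^{(m-k)}(x)=\sum_{j=0}^{k}(-1)^j\binom{k}{j}\,\frac{x^j}{(m-k+j)!}
\]
lies in $[f_{m-k,m}]$ with $a_j=(-1)^j\binom{k}{j}$, one sets $m=\alpha+k$ and invokes the same three propositions already used for Theorem~\ref{MAIN-thm-Sk-probability}. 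The hypotheses for $A_k^{p_1}$ and $B_k^{p_2}$ (namely $p_i\nmid a_0a_k$ and $p_i\nmid a_0a_{2\lfloor k/2\rfloor}$) are automatic because $p_1,p_2>k$ divide no $\binom{k}{j}$. The only genuine verification is for the event $C_k^{p,p}$ when the witness prime satisfies $k/2<p<k$: one must check $p\nmid\binom{k}{j_1}\binom{k}{j_2}$ for $j_2=j_1+p$. A direct computation gives $v_{p}\binom{k}{j}=1$ for $j\in(k-p,p)$ and $0$ otherwise; since $j_1\le k-p$ and $j_2\ge p$, both indices fall outside the bad interval, and the proof is complete. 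Rerunning the Newton-polygon and sieve machinery from scratch, as you propose, is unnecessary.

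Two specific steps in your sketch would also not survive as written. Filaseta--Lam treats a fixed rational $\alpha$ with $k\to\infty$, the opposite of the regime here (random large $\alpha$, $k=x^{o(1)}$), so it cannot be imported for irreducibility. And a length-two Newton-polygon segment does not by itself force an odd permutation in $G_f$: the inertia element acts on all $k$ roots at once, and you have no control over the parity of its action on the remaining $k-2$. The paper instead produces a single $2\lfloor k/2\rfloor$-cycle (event $B_k^p$), which is an odd permutation regardless of the parity of $k$.
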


In fact our results hold for a larger family of polynomials that unifies Theorems~\ref{MAIN-thm-Sk-probability} and \ref{Main-thm-Laguerre}, see Section~\ref{sec-methods}.

Small degrees $k\leq 7$ are discussed in Section~\ref{SubSec_small_k}.

\subsection*{Acknowledgments}
We thank Michael Filaseta for the historical remark on the motivation of Schur. We are also grateful to Dimitris Koukoulopoulos on helpful suggestions regarding smooth numbers.

The authors were supported by a grant of the Israel Science Foundation, no.\  702/19.

This work is based on a master thesis done by OBP under the supervision LBS.

\section{Proof of Theorems~\ref{MAIN-thm-Sk-probability} and \ref{Main-thm-Laguerre}}\label{sec-methods}
In this section we discuss the family of polynomials $[f]$ coming from a polynomial $f$,  we state a few technical propositions, and we deduce Theorems~\ref{MAIN-thm-Sk-probability} and \ref{Main-thm-Laguerre} from the propositions. The proofs of the propositions appear in Section~\ref{sec-proof-propositions}. In that section we will also give bounds on the convergence rates.

For a polynomial $f=\sum_{j=0}^{k} b_j x^j$ we define
\begin{equation}
\label{eq:def_fam}
    [f]=\Big\{\sum_{j=0}^{k}a_j b_j x^j:a_j \in \mathbb{Z} \mbox{ and $a_0a_k\neq 0$} \Big\}.
\end{equation}

Schur \cite{schur1929einige} showed that 
$f\in [L_k^{(-k-1)}]$ is irreducible if
$|a_{0}|=|a_{k}|=1$. 
Filaseta \cite{filaseta1996generalization} relaxed Schur's condition to 
$|a_{0}|=1$, $0<|a_{k}|<k$,  
and 
$(k,a_{k})\neq(6,\pm5),(7,\pm10)$.
Filaseta and Lam \cite{filaseta2002irreducibility1} showed that for $\alpha$ a rational number that is not a negative integer and $|a_{0}|=|a_{k}|=1$, $f\in [L_k^{(\alpha)}]$ is irreducible for all but finitely many $k$.

Since
\[
    L_k^{(\alpha)}(x)= \sum_{j=0}^{k}{k \choose j} \frac{(\alpha+k)!}{k!} \frac{1}{(j+\alpha)!}(-x)^{j}.
\]
and $a_j:=(-1)^j {k \choose j} \frac{(\alpha+k)!}{k!}$ is integral, we have 
$$
    [L_k^{(\alpha)}]\subset [f_{\alpha,\alpha+k}].
$$
This motivated Filaseta, Finche, and Leidy \cite{filaseta2008tn} to suggest to study the families $[f_{\alpha,\alpha+k}]$ as a form of generalization of  $[L_k^{(\alpha)}]$.

Throughout this section  $m$ is a uniformly chosen random integer in the interval $[1,x]$. We recall that every $f\in \Big[\sum_{j=0}^k b_jx^j\Big]$ comes with a tuple $(a_0,\ldots, a_k)$, as in (\ref{eq:def_fam}). 
We define three families of events. For a positive integer $k$ and for primes $p,q$ (not necessarily distinct) let 
\[
    \begin{split}
        A_k^p &= \{ \forall f\in [f_{m-k,m}],\ (p\nmid a_0a_k \Rightarrow f \mbox{ is irreducible})\},\\
        B_k^p &= \{\forall f\in [f_{m-k,m}],\ (p\nmid a_0a_{2\lfloor \frac{k}{2}\rfloor} \Rightarrow G_f\neq A_k)\},\\
        C_{k}^{q,p} & = \{ \exists j_1,j_2=j_1+p, \forall f\in [f_{m-k,m}], (q\nmid a_{j_1}a_{j_2}\Rightarrow G_f\geq A_k)\}.
    \end{split}
\]

The technical propositions compute the  probabilities of these events in certain ranges. 

For $A_k^p$ and $B_k^p$ we need that $k\leq x^{o(1)}$:
\begin{prop}
\label{MAIN-cor-irreducibility-probability}
Let $k_{\max}=x^{o(1)}$. Then 
\[
\displaystyle{\lim_{x\to\infty}}\mathbb{P}\left( \displaystyle{\bigcup _{p>k_{\max}}\bigcap _{k=1}^{k_{\max}}}A_k^p\right)=1.
\]
\end{prop}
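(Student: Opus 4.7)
The plan is to exhibit, with probability $1-o(1)$ over the random choice of $m$, a single prime $p>k_{\max}$ satisfying $v_p(m)=1$, and to verify that any such prime automatically certifies the intersection $\bigcap_{k=1}^{k_{\max}}A_k^p$ via a Newton polygon argument that is uniform in $k$.

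For the Newton polygon step, fix $1\le k\le k_{\max}$ and $f=\sum_{i=0}^k a_ib_ix^i\in[f_{m-k,m}]$ with $b_i=1/(m-k+i)!$ and $p\nmid a_0a_k$. Clearing denominators yields
\[
    g(x)=m!\,f(x)=\sum_{i=0}^k a_i\,\frac{m!}{(m-k+i)!}\,x^i\in\ZZ[x].
\]
Since $p>k_{\max}\ge k$, at most one integer in the window $\{m-k+1,\dots,m\}$ is a multiple of $p$; by $v_p(m)=1$ this integer is $m$ itself, so $v_p(m!/(m-k+i)!)=1$ for $i<k$ and $=0$ for $i=k$. Combined with $p\nmid a_0a_k$, the Newton polygon of $g$ at $p$ is the single segment from $(0,1)$ to $(k,0)$, of slope $-1/k$ already in lowest terms; the standard Newton polygon theorem then forces every irreducible factor of $g$ over $\QQ_p$, and hence of $f$ over $\QQ$, to have degree divisible by $k$, so $f$ is irreducible.

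For the probabilistic step, let $E_x=\{m\le x:\exists\,p>k_{\max}\text{ with }v_p(m)=1\}$. Writing $m=s\cdot t$ with $s$ the $k_{\max}$-smooth part and $t$ the rough part, $m\in E_x^c$ iff either $t=1$ (so $m$ is smooth) or $t>1$ is squarefull. Hence
\[
    |E_x^c|\le\Psi(x,k_{\max})+\sum_{\substack{t>1\ \text{squarefull}\\ P^-(t)>k_{\max}}}\Psi(x/t,k_{\max}),
\]
where $\Psi(y,z)=\#\{n\le y:P^+(n)\le z\}$. The first term is $o(x)$ by the Dickman--de~Bruijn estimate for $k_{\max}=x^{o(1)}$. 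For the second, the bound $\sum_t 1/t=O(1/k_{\max})$ derived from the convergent product $\prod_{p>k_{\max}}(1+1/(p(p-1)))$ gives $O(x/k_{\max})=o(x)$ when $k_{\max}\to\infty$; when $k_{\max}$ stays bounded one substitutes the polylogarithmic bound $\Psi(y,k_{\max})=O_{k_{\max}}((\log y)^{\pi(k_{\max})})$. Either way $\mathbb{P}(E_x)\to 1$, and on $E_x$ the event of the proposition holds.

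The key design choice is to demand $v_p(m)=1$ rather than merely $p\mid m$: this forces the slope $-v_p(m)/k$ into lowest terms \emph{uniformly} in $k\le k_{\max}$, so a single prime handles every $k$ at once. The main obstacle is therefore bookkeeping with smooth and squarefull numbers, which is routine but requires enough uniformity to cover both the bounded and growing regimes of $k_{\max}=x^{o(1)}$.
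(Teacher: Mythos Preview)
Your argument is correct and follows essentially the same route as the paper: exhibit a prime $p>k_{\max}$ with $v_p(m)=1$, observe that the $p$-adic Newton polygon of $m!\,f$ is then the single segment from $(0,1)$ to $(k,0)$ for every $k\le k_{\max}$, and deduce irreducibility. The only difference is cosmetic: where you split the bad set into smooth numbers and rough-squarefull numbers and treat the bounded/unbounded regimes of $k_{\max}$ separately, the paper packages the same estimate as $\mathbb{P}(\exists\,p>\kappa:v_p(m)=1)=1-O(\log\kappa/\log x+1/\kappa)$ and then simply applies it with $\kappa=\max\{k_{\max},\log x\}$, which forces $\kappa\to\infty$ and avoids the case split while only shrinking the event under consideration.
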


\begin{prop}
\label{MAIN-prop-not-Ak-probability}

Let $k_{\max} = x^{o(1)}$. Then
\[
\displaystyle{\lim_{x\to\infty}}\mathbb{P}\left(\displaystyle{\bigcup _{p>k_{\max}}\bigcap _{k=1}^{k_{\max}}}B_k^p\right)=1.
\]
\end{prop}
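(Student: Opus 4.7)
The strategy is a $p$-adic Newton polygon analysis of the cleared polynomial $F_a(x) = m!\cdot f(x)$ at a large prime $p$, designed to force the $p$-adic valuation of $\mathrm{disc}(F_a)$ to be odd and thereby exclude $G_f = A_k$. First I produce the prime: since $k_{\max}=x^{o(1)}$, standard smooth-number estimates (Dickman--de Bruijn) show that with probability $1-o(1)$ both $m$ and $m-1$ admit a prime divisor exceeding $k_{\max}$, and I take $p > k_{\max}$ with $p \mid m-k+2\lfloor k/2\rfloor$, which equals $m$ for even $k$ and $m-1$ for odd $k$.

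Next comes the Newton polygon computation. For $f = \sum_{j=0}^k a_j x^j/(m-k+j)! \in [f_{m-k,m}]$ satisfying the side condition $p\nmid a_0 a_{j_0}$ with $j_0 := 2\lfloor k/2\rfloor$, the coefficient of $x^j$ in $F_a$ is $a_j \cdot m(m-1)\cdots(m-k+j+1)$. Since $p > k$, the prime $p$ divides exactly one integer in the window $\{m-k+1,\ldots,m\}$, namely $m-k+j_0$, only to the first power. Hence $v_p(\text{coeff of } x^j) \geq 1$ for $j<j_0$ and $= v_p(a_j)$ for $j \geq j_0$. The side condition pins $(0,1)$ and $(j_0,0)$ as endpoints with all intermediate points strictly above, so the Newton polygon of $F_a$ at $p$ contains the edge from $(0,1)$ to $(j_0,0)$ of slope $-1/j_0$. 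Since $\gcd(1,j_0)=1$ and $p > j_0$, this edge corresponds to an irreducible factor $F_1 \in \QQ_p[x]$ of $F_a$ of degree $j_0$ defining a tamely totally ramified extension of $\QQ_p$.

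Writing $F_a = F_1 F_2$ in $\QQ_p[x]$ with $\deg F_2 = k - j_0 \in \{0,1\}$, tame total ramification yields $v_p(\mathrm{disc}(F_1)) = j_0 - 1$, while $v_p(\mathrm{disc}(F_2)) = 0$ and $2v_p(\mathrm{Res}(F_1,F_2))$ is even. Hence $v_p(\mathrm{disc}(F_a)) \equiv j_0 - 1 \pmod 2$, which is odd because $j_0$ is even by construction. Consequently $\mathrm{disc}(F_a) \notin (\QQ^\times)^2$, giving $G_f \not\subseteq A_k$ and in particular $G_f \neq A_k$, as required.

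The main technical obstacle is securing a single prime $p > k_{\max}$ in the event $\bigcup_p \bigcap_k B_k^p$ that handles every $k \in [1,k_{\max}]$ simultaneously: the required divisibility $p \mid m-k+2\lfloor k/2\rfloor$ alternates between $m$ and $m-1$ with the parity of $k$, so no single large prime factor of $m$ alone suffices. Reconciling this by combining the two smooth-number events for $m$ and $m-1$, tracking the decomposition-group cycle in each parity class, and bookkeeping the exceptional set uniformly in $k$ is where the quantitative work concentrates.
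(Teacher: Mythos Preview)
Your approach is essentially the paper's: locate a large prime $p$ dividing $m$ (or $m-1$) to the first power via smooth-number bounds, compute the $p$-adic Newton polygon of $m!f$, and read off an obstruction to $G_f\le A_k$. The paper formulates the obstruction as the presence of a $2\lfloor k/2\rfloor$-cycle (an odd permutation) in $G_f$, whereas you compute $v_p(\mathrm{disc}\,f)\equiv j_0-1\pmod 2$; these are equivalent, since a tamely totally ramified factor of even degree $j_0$ simultaneously gives a $j_0$-cycle in inertia and discriminant exponent $j_0-1$. One small correction: you need $v_p(m-k+j_0)=1$, not merely $p\mid m-k+j_0$; a bare Dickman--de Bruijn estimate does not give this, but the paper's Proposition~\ref{SM-prop-non-square-probability} (applied as in Corollary~\ref{SM-cor-unbounded-non-square-probability} with $t=2$ to cover both $m$ and $m-1$) does, at the cost of an extra $O(1/k_{\max})$.

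The obstacle you single out is genuine and is not addressed by the paper's sketch either: the paper simply says the proof is ``similar'' to Proposition~\ref{RE-thm-irreducibility-probability} and leaves the details to the reader. With the Newton-polygon mechanism as stated, even $k$ forces $p\mid m$ and odd $k$ forces $p\mid m-1$, so no single prime witnesses $\bigcap_k B_k^p$, and the literal $\bigcup_p\bigcap_k$ assertion does not follow from this argument. However, the only place Proposition~\ref{MAIN-prop-not-Ak-probability} is used (in the proofs of Theorems~\ref{MAIN-thm-Sk-probability} and~\ref{Main-thm-Laguerre}) the prime $p_2$ is allowed to depend on $k$, i.e.\ what is actually needed is the weaker $\bigcap_k\bigcup_p B_k^p$. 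That weaker statement your plan proves cleanly: take $p_0>k_{\max}$ with $v_{p_0}(m)=1$ for even $k$ and $p_1>k_{\max}$ with $v_{p_1}(m-1)=1$ for odd $k$, both supplied by Corollary~\ref{SM-cor-unbounded-non-square-probability} with probability $1-o(1)$. So your proposal is correct for the statement actually required downstream; the quantifier order as printed is a wrinkle shared with the paper, not a defect peculiar to your argument.
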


For $C_k^{q,p}$ we get a wider range of $k$: 

\begin{prop}
\label{MAIN-prop-Ak-probability}
Let $k_{\max} = x^{1/7-\epsilon}$. Then 
\[
\displaystyle{\lim_{x\to\infty}}\mathbb{P}\left(\displaystyle{\bigcap _{k=8}^{k_{\max}}\bigcup _{q > k > p > \frac{k}{2}}}C_k^{q,p}\cup C_k^{p,p}\right)=1.
\]
\end{prop}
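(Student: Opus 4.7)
The plan is a Newton-polygon analysis at an auxiliary prime $q$, combined with Jordan's theorem on transitive subgroups of $S_k$, reducing the proposition to a short-interval smoothness estimate. Write $N = m - k$, so the base polynomial is $\sum_{j=0}^k b_j x^j$ with $b_j = 1/(N+j)!$, giving $v_q(b_j) = -v_q(N!) - \sum_{i=1}^{j} v_q(N+i)$. For a prime $q > k$, at most one of $N+1, \ldots, N+k$ is divisible by $q$; if that multiple is $N+p$ for some prime $p \in (k/2, k)$ and $\gcd(v_q(N+p), p) = 1$, the $q$-adic Newton polygon consists of a decreasing edge from $(0, -v_q(N!))$ to $(p, -v_q(N!) - e)$ of slope $-e/p$ in lowest terms (where $e = v_q(N+p)$), followed by a horizontal edge. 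The Newton polygon theorem then provides a totally ramified irreducible factor of degree $p$ in $\mathbb{Q}_q[x]$, whose inertia contributes a $p$-cycle to $G_{f_{N, N+k}} \hookrightarrow S_k$. A parallel analysis at $q = p$ yields $C_k^{p, p}$ when $p \mid m - k$ with the same valuation condition: the two multiples of $p$ in $\{N, \ldots, N+k\}$ sit at positions $0$ and $p$.

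Stability under the family is automatic, since $v_q(a_j b_j) \geq v_q(b_j)$ for $a_j \in \mathbb{Z}$ means the Newton polygon of any $f \in [f_{m-k, m}]$ lies pointwise above that of the base polynomial, and the identified edge persists exactly when its endpoints are preserved, i.e., $q \nmid a_{j_1} a_{j_2}$ with $(j_1, j_2) = (0, p)$. Combining the $p$-cycle with irreducibility of $f$---i.e., transitivity of $G_f$ in $S_k$, provided by Proposition~\ref{MAIN-cor-irreducibility-probability}---and Jordan's classical theorem, which states that a transitive subgroup of $S_k$ containing a $p$-cycle for a prime $p$ with $k/2 < p < k - 2$ must contain $A_k$, yields $G_f \supseteq A_k$ and so verifies $C_k^{q, p} \cup C_k^{p, p}$. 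The proposition thus reduces to showing that with probability $\to 1$, for every $k \in [8, k_{\max}]$ there is a prime $p \in (k/2, k)$ such that $m - k + p$ has a prime factor in $\{p\} \cup (k, \infty)$ to an exponent coprime to $p$; the lower bound $k \geq 8$ is the smallest value for which the Jordan range $(k/2, k-2)$ contains a prime, by Bertrand.

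The probabilistic core is to bound the failure probability that every shift $m - k + p$ (with $p$ ranging over $P(k) = \{p \text{ prime}: k/2 < p < k\}$) is $k$-smooth and $p \nmid m-k$; up to a negligible $O(1/k^2)$ loss from the valuation condition, this is essentially the joint smoothness event on $|P(k)| \sim k/(2\log k)$ shifts. The plan is a second-moment argument on $X_k = \#\{p \in P(k) : m - k + p \text{ has a prime factor exceeding } k\}$: Hildebrand--Tenenbaum estimates give $\mathbb{E}[X_k] \sim |P(k)|(1 - \rho(\log x / \log k))$, and a Selberg sieve on pairs of shifts $m - k + p, m - k + p'$ controls $\mathrm{Var}(X_k)$, so Chebyshev yields $\mathbb{P}(X_k = 0) \ll 1/|P(k)|$ and (after sharper analysis exploiting the $|P(k)|$-fold joint smoothness) considerably less. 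The main obstacle will be making the sieve correlations small enough that $\sum_{k \leq x^{1/7 - \epsilon}} \mathbb{P}(X_k = 0) = o(1)$; the threshold exponent $1/7$ presumably reflects the range where short-interval smooth-number estimates remain effective against the cost of the union bound over all $k$, while the technical valuation requirement $\gcd(v_q(N+p), p) = 1$ is absorbed into the same sieve by removing the sparse set where $q^p \mid N+p$.
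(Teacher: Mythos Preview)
Your Newton-polygon and Jordan-theorem setup is correct and matches the paper's. The gap is the probabilistic core. A second-moment (Chebyshev) bound on $X_k$ can give at best $\mathbb{P}(X_k=0)\ll 1/|P(k)|\sim (\log k)/k$, and $\sum_{k\le k_{\max}}(\log k)/k$ diverges. The parenthetical ``sharper analysis exploiting the $|P(k)|$-fold joint smoothness'' is therefore the entire content of the proof, and you have not supplied it. Getting $\mathbb{P}(\text{all }|P(k)|\text{ shifts are }k\text{-smooth})\ll k^{-1-\delta}$ would require controlling high-order smoothness correlations for many shifted integers, which is not a routine sieve step; the second moment alone cannot see below $1/|P(k)|$.

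The paper avoids this difficulty by a two-regime split that you did not anticipate. For small $k$ (up to $\kappa_1=\log x$) it uses your $C_k^{q,p}$ mechanism with $q>k$: the smooth-number bound (Corollary~\ref{SM-cor-unbounded-non-square-probability}) shows that with probability $1-O(t\log\kappa_1/\log x + t/\kappa_1)$ each of the $t=(\log x)^{0.525}$ integers $m,m-1,\ldots,m-t+1$ has a prime factor $q_i>\kappa_1$ with $v_{q_i}(m-i)=1$; taking $i=k-p$ for the largest prime $p\in(k-t,k-2)$ produces the required edge of slope $-1/p$. For large $k$ (from $\log x$ up to $k_{\max}$) the paper switches entirely to $C_k^{p,p}$: Baker--Harman--Pintz gives three primes $p_1,p_2,p_3$ in $(k/2,\,k/2+3k^{0.525})$, and by CRT modulo $(p_1p_2p_3)^2$ the probability that for \emph{none} of them the interval $[m-k,m]$ contains two multiples of $p_i$ with the larger one satisfying $v_{p_i}=1$ is $O((k^{0.525}/k)^3 + k^6/x)=O(k^{-1.425}+k^6/x)$, which is summable over $k$. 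The exponent $1/7$ thus comes from the CRT modulus $(p_1p_2p_3)^2\asymp k^6$ and the resulting term $\sum_{k\le k_{\max}} k^6/x\asymp k_{\max}^7/x$, not from any smooth-number threshold as you guessed.
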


We are now ready to prove the theorems:
\begin{proof}[Proof of Theorem~\ref{MAIN-thm-Sk-probability}]
    By the above propositions, with probability $1+o_{x\to \infty} (1)$, for every $k=8,\ldots, k_{\max}$ there exist $p_1,p_2,p_3,p_4$ such that $p_1,p_2>k>p_3>k/2$ and either $p_4>k$ or $p_4=p_3$, such that $A_k^{p_1}$, $B_{k}^{p_2}$, and $C_k^{p_3,p_4}$ hold. 
    Since $f_{m-k,m}\in [f_{m-k,m}]$ with $a_j=1$ for all $j$, we conclude that $f_{m-k,m}$ is irreducible, with Galois group $G_f=S_k$, as needed. 
\end{proof}

\begin{proof}[Proof of Theorem~\ref{Main-thm-Laguerre}]
    We start exactly the same as in the proof of Theorem~\ref{MAIN-thm-Sk-probability}. Now, the polynomial 
    \[
        f = \frac{k!}{m!} L_k^{(m-k)}
    \]
    satisfies $f\in [f_{m-k,k}]$ with $a_j = (-1)^j\binom{k}{j}$. In particular,
    $p_1,p_2\nmid a_j$ for all $j$, so  with probability $1+o_{x\to \infty}(1)$, the polynomial $f$ is irreducible and $G_f\neq A_k$. If $p_4>k$, then we also have that $G_f\geq A_k$, and so $G_f=S_k$ and the proof is done. 
    
    To this end assume that $k>p_3=p_4>k/2$.  To finish the proof, it suffices to show that $p_3\nmid a_{j_1}a_{j_2}$ for every $j_1, j_2=j_1+p_3$. 
    We have 
    \[
        v_{p_3}(a_j)=1-v_{p_3}(j!(k-j)!)=
        \begin{cases}
            1 & j \in (k-p_3,p_3), \\
            0 & \textnormal{otherwise}.
        \end{cases}
    \]
Both $j_1=j_2-p_3 \leq k-p_3$ and $j_2=j_1+p_3 \geq p_3$ lie outside the interval $(k-p_3,p_3)$, therefore $p_3 \nmid a_{j_1}a_{j_2}$, as required.
\end{proof}

\numberwithin{thm}{section}

\section{Newton Polygons}

\subsection{Setting}
Let $\QQ_p\subseteq K \subseteq L$ be a tower of finite extensions. We denote by 
$e=e(L/K)$ 
the ramification index and by $v$ the unique extension of the $p$-adic valuation to $L$, normalized so that 
$v(K^{\times})=\ZZ$.

\begin{defn}
\label{NP-def}
Let $f\in K[x]$
be of degree $k$, and assume that $f(0)\neq 0$. 
Write $f=\sum_{j=0}^{k}b_{j}x^{j}$
and consider the set of points
\[
B(f)=B^{(p)}(f)=
\{ (j,v(b_{j})):0\leq j\leq k\}\subset\ZZ\times(\ZZ\cup\{ \infty\}).
\]
We recall the definition of the \emph{Newton polygon} of $f$ as the boundary of the lower convex hull of $B(f)$; i.e., the lowest possible piecewise linear function that includes 
$(0,v(b_{0})),(k,v(b_{k}))$
and all of its vertices are in 
$B(f)$. 
We denote it by 
$\NP(f)$ 
or 
$\NP^{(p)}(f)$
if we wish to emphasize the valuation.

\label{NP-def-lattice-points}
We denote the set of lattice points on $\NP(f)$ by
\[
C(f)=C^{(p)}(f)=
\NP(f)\cap(\ZZ\times\ZZ).
\]
\end{defn}

\begin{rem}
\label{NP-rem-compactness}
By assumption $b_0,b_k$ are nonzero, hence
$v(b_{0}),b(f_{k})\in \ZZ$, hence $\NP(f)$ is compact.
\end{rem}

\begin{figure}[h]
    \centering
\fbox{\includegraphics{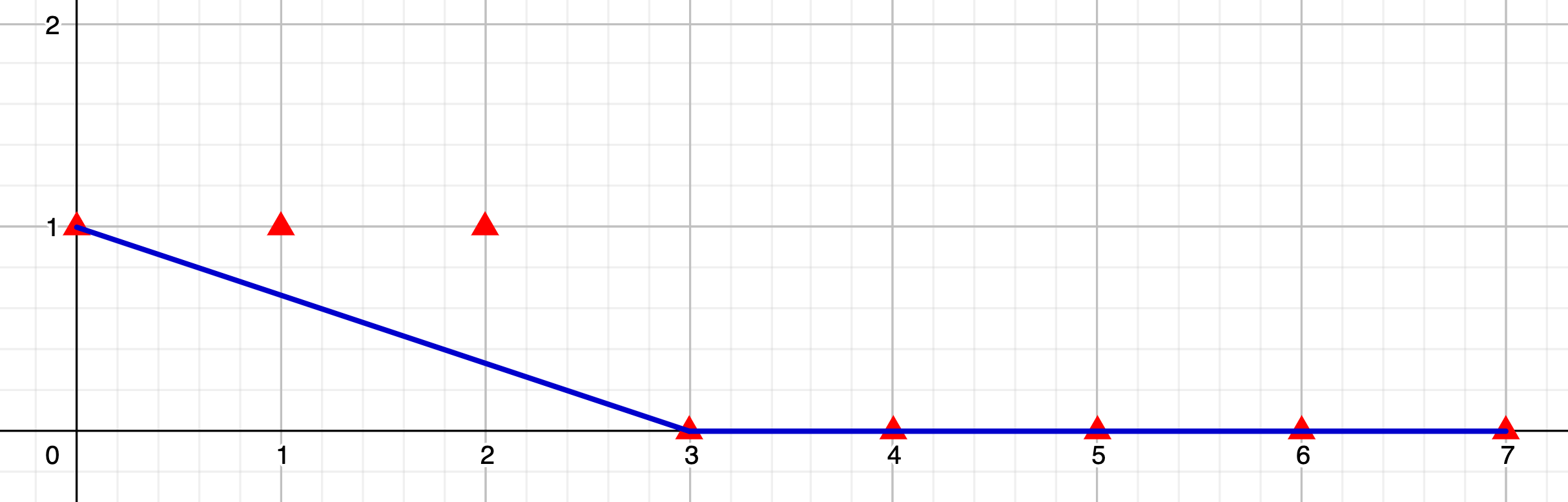}}
    \caption{$\NP^{(p)}(m! f_{m,n})$ with $n=1342340$, $m=1342347$, and $p=1342343$}
\end{figure}

\subsection{Basic Properties} By definition,  $\NP(f)$ is piecewise linear with increasing rational slopes. The set of vertices of $\NP(f)$ is a subset of $B(f)$, and usually a proper subset. 
\begin{defn}
We denote the sequence of slopes of $\NP(f)$
\label{NP-def-slope-sequence}
by
\[
\beta_{j}:=\NP(f)(j)-\NP(f)(j-1).
\]
\end{defn}

We collect below a few well known properties. See \cite{dumas1906quelques} for proofs.  

The slopes of $NP(f)$ give the valuation of the roots of $f$; hence relating the valuation of the coefficients with the valuation of the roots. This idea goes back to Newton, in the context of complex polynomials in two variables, see \cite{brieskorn2012plane}. More precisely:

\begin{lem}
\label{NP-lem-Newton}
Let $f\in K[x]$ be of degree $k$ with $f(0) \neq 0$, and $\alpha$ a root of $f$. Then for some $1 \leq j \leq k$ we have $v(\alpha)=-\beta_{j}$.
\end{lem}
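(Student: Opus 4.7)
The plan is to pass to a splitting field of $f$ and relate the valuations of the coefficients to those of the roots via Vieta's formulas. Extend $v$ to an algebraic closure of $K$ containing all roots of $f$ (the extension is unique up to normalization on each finite subextension), and write $f = b_k \prod_{i=1}^{k}(x - \alpha_i)$ with the roots ordered so that $v(\alpha_1) \leq v(\alpha_2) \leq \cdots \leq v(\alpha_k)$. Let $v_1 < v_2 < \cdots < v_r$ be the distinct values taken, with multiplicities $m_1, \ldots, m_r$ summing to $k$. The statement of the lemma will follow at once from the stronger claim that $\NP(f)$ consists of $r$ segments whose slopes, read left to right, are $-v_r, -v_{r-1}, \ldots, -v_1$, with respective horizontal lengths $m_r, m_{r-1}, \ldots, m_1$.

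To prove this I would first use Vieta to write $b_{k-s}/b_k = (-1)^s e_s(\alpha_1, \ldots, \alpha_k)$. The ultrametric inequality gives $v(e_s(\alpha_1, \ldots, \alpha_k)) \geq v(\alpha_1) + \cdots + v(\alpha_s)$, hence every point $(k-s, v(b_{k-s}))$ lies on or above the piecewise-linear curve $\Lambda$ obtained by joining the partial-sum points $P_s := (k-s,\, v(b_k) + v(\alpha_1) + \cdots + v(\alpha_s))$ for $s = 0, 1, \ldots, k$. Because the sequence $v(\alpha_j)$ is non-decreasing, $\Lambda$ is itself convex from below, so it coincides with the lower convex hull of the $P_s$, and its slopes are exactly $-v(\alpha_j)$ with the announced multiplicities.

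The crux is to show that at the break-points $s_i := m_1 + \cdots + m_i$ the inequality is in fact an equality, so that $(k-s_i, v(b_{k-s_i}))$ lies on $\Lambda$ and therefore on $\NP(f)$. For such an $s_i$, the monomial $\alpha_1 \cdots \alpha_{s_i}$ is the \emph{unique} term in the expansion of $e_{s_i}(\alpha_1, \ldots, \alpha_k)$ of minimal valuation $m_1 v_1 + \cdots + m_i v_i$: any other product of $s_i$ distinct roots must replace some $\alpha_j$ with $j \leq s_i$ by an $\alpha_{j'}$ of valuation strictly greater than $v_i$, strictly increasing the total valuation. The strong triangle inequality applied to a sum with a strictly unique minimum-valuation summand then forces equality, $v(e_{s_i}) = m_1 v_1 + \cdots + m_i v_i$. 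Combining this with the lower bound of the previous paragraph pins $\NP(f) = \Lambda$, which completes the proof since the root $\alpha$ has valuation $v_i$ for some $i$, and $-v_i$ appears among the $\beta_j$'s (in fact for $m_i$ consecutive values of $j$). The only subtle point, and the one I would be most careful about, is precisely this uniqueness-of-minimum argument: it holds at the break-points $s_i$ but can fail strictly between them, which is why the Newton polygon is determined by the break-point vertices rather than by all the $P_s$.
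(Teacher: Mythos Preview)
Your argument is correct and is the classical proof of this fact. Note, however, that the paper does not actually supply its own proof of this lemma: it is listed among ``a few well known properties'' with the reader referred to Dumas \cite{dumas1906quelques}. So there is no proof in the paper to compare against; you have filled in what the authors left as a citation.

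Your write-up in fact establishes the sharper statement that the full Newton polygon is determined by the multiset of root valuations (slopes $-v_i$ with multiplicities $m_i$), from which the lemma follows immediately. The uniqueness-of-minimum step at the break-points $s_i$ is handled correctly: any other $s_i$-subset of roots must contain some $\alpha_{j'}$ with $j'>s_i$, hence with valuation at least $v_{i+1}>v_i$, so the swap strictly increases the total valuation. One cosmetic remark: when you say ``extend $v$ to an algebraic closure,'' you should be aware that the paper's normalization $v(K^\times)=\ZZ$ means the extended valuation takes rational values on the roots; this is harmless for your argument but worth saying explicitly so that the equality $v(\alpha)=-\beta_j$ lands in $\QQ$ as it should.
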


 The next two lemmas relate $\NP(gh)$ with $\NP(g)$ and $\NP(h)$. 

\begin{lem}
\label{NP-lem-Dumas-product}
Let $f=gh$ where $g,h \in K[x]$ with $g(0),h(0) \neq 0$. Then the sequence of slopes of $f$ is the same as the combined sequence of slopes of $g$ and $h$, ordered to be increasing. 
\end{lem}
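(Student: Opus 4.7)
The plan is to show that the Newton polygon of $gh$ is the Minkowski sum of the Newton polygons of $g$ and $h$ (viewed as graphs of convex piecewise linear functions), and then to invoke the elementary fact that the slope sequence of a Minkowski sum of two increasing convex piecewise linear curves is the merge of their individual slope sequences in increasing order. This reduces the statement to a purely geometric identity plus a valuation-theoretic comparison.

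First I would set up notation: write $g=\sum_{i=0}^{\deg g} c_i x^i$ and $h=\sum_{j=0}^{\deg h} d_j x^j$, so that the coefficients of $f=gh$ are $b_r=\sum_{i+j=r}c_i d_j$. The ultrametric inequality yields the universal bound
\[
    v(b_r) \;\geq\; \min_{i+j=r}\bigl(v(c_i)+v(d_j)\bigr).
\]
Translating this geometrically, every point of $B(f)$ lies on or above the lower convex hull of $B(g)+B(h)=\{(i+j,v(c_i)+v(d_j))\}$, which in turn is exactly the Minkowski sum of $\NP(g)$ and $\NP(h)$. Hence $\NP(f)$ lies on or above $\NP(g)+\NP(h)$.

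For the reverse inequality, I would check that the vertices of $\NP(g)+\NP(h)$ actually occur in $B(f)$. A vertex of the Minkowski sum is obtained at a pair of indices $(i_0,j_0)$ that are vertices of $\NP(g)$ and $\NP(h)$ respectively and that, moreover, are the unique minimisers of the functional $(i,j)\mapsto v(c_i)+v(d_j)$ on the set $\{i+j=i_0+j_0\}$; this uniqueness is forced by the strict convexity at a vertex of the respective Newton polygons. For such $(i_0,j_0)$, all summands in $b_{i_0+j_0}$ have valuation strictly greater than $v(c_{i_0})+v(d_{j_0})$ except the single term $c_{i_0}d_{j_0}$, so the ultrametric inequality becomes an equality:
\[
    v(b_{i_0+j_0}) = v(c_{i_0})+v(d_{j_0}).
\]
Thus every vertex of $\NP(g)+\NP(h)$ belongs to $B(f)$ and sits on the lower convex hull, so $\NP(f)=\NP(g)+\NP(h)$.

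Finally, I would conclude by a short geometric remark: if $P$ and $Q$ are graphs of increasing convex piecewise linear functions on $[0,\deg g]$ and $[0,\deg h]$ starting from their left endpoints, their Minkowski sum is the graph, on $[0,\deg g+\deg h]$, of the piecewise linear function whose ordered slope sequence is the merge of the slope sequences of $P$ and $Q$. Applied to $\NP(g)$ and $\NP(h)$, this gives exactly the claim. The main obstacle in this argument is the uniqueness-of-minimiser step at Minkowski vertices; handling it requires care at vertices where several lattice points of $B(g)$ or $B(h)$ sit on a common edge, but this is controlled by the strict convexity of the slope sequence at vertices.
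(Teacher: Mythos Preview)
The paper does not prove this lemma at all; it simply lists it among the ``well known properties'' of Newton polygons and refers the reader to Dumas for proofs. So there is no in-paper argument to compare against.

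Your outline is the standard proof of Dumas's theorem and is correct. The identification $\NP(gh)=\NP(g)+\NP(h)$ via the ultrametric inequality, followed by checking that each vertex of the Minkowski sum is realised in $B(f)$ because the minimising pair $(i_0,j_0)$ is unique there, is exactly how the classical argument runs. The obstacle you flag at the end is not a genuine difficulty: at a vertex $r_0$ of the Minkowski sum the slope strictly increases, say from $\beta$ to $\beta'$, and this alone forces the decomposition $r_0=i_0+j_0$ with $i_0,j_0$ vertices of $\NP(g),\NP(h)$ and the strict inequality $\NP(g)(i)+\NP(h)(j)>\NP(g)(i_0)+\NP(h)(j_0)$ for every other pair with $i+j=r_0$. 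Lattice points of $B(g)$ or $B(h)$ lying on an interior of an edge play no role, since one only needs $v(c_i)\ge\NP(g)(i)$ and equality at the vertices $i_0,j_0$.
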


\begin{lem}
\label{NP-lem-Dumas-factorization}
Let $f \in K[x]$, with $f(0) \neq 0$. If $\NP(f)$ consists of $r$ segments of distinct slopes $m_1,\ldots, m_r$ and lengths $d_1,\ldots, d_r$, then $f$ has a factorization of the form $f={\displaystyle \prod_{i=1}^{r}}g_{i}$, where $g_i\in K[x]$, $\deg(g_{i})=d_{i}$, and $\NP(g_{i})$ is a line with slope $m_{i}$.
\end{lem}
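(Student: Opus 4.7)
The plan is to construct the factorization $f=\prod_{i=1}^{r} g_i$ by grouping the roots of $f$ in a splitting field according to their valuations, then to verify by a symmetric-function computation that each $g_i$ has the prescribed Newton polygon, and finally to check degrees by invoking Lemma~\ref{NP-lem-Dumas-product}.

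More concretely, I would pass to a finite extension $L/K$ over which $f$ splits, and extend $v$ uniquely (possible since $K$ is a $p$-adic field). By Lemma~\ref{NP-lem-Newton}, every root $\alpha$ of $f$ satisfies $v(\alpha)=-m_i$ for some $i$. So I can partition the multiset of roots and define, for each $i$,
\[
    g_i(x) \;=\; \prod_{\substack{\alpha:\, f(\alpha)=0 \\ v(\alpha)=-m_i}} (x-\alpha) \;\in\; L[x],
\]
counted with multiplicity, giving a factorization $f = b_k\prod_{i=1}^{r} g_i$. The first thing to verify is that $g_i\in K[x]$: since the extension of $v$ to $L$ is unique, every $K$-automorphism of $L$ preserves valuations, so it permutes each set $\{\alpha : v(\alpha)=-m_i\}$ within itself, leaving the coefficients of $g_i$ fixed under $\Gal(L/K)$ (or more generally under the action of the absolute Galois group on coefficients expressed via symmetric functions).

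Next I would compute $\NP(g_i)$. Writing $g_i = \sum_{j=0}^{d_i'} c_j x^j$ with $d_i' := \deg g_i$, the coefficient $c_{d_i'-\ell}$ is $(-1)^\ell$ times the $\ell$-th elementary symmetric function in the $d_i'$ roots, each of valuation $-m_i$. The ultrametric inequality gives $v(c_{d_i'-\ell}) \geq -\ell m_i$, with equality at $\ell=0$ and at $\ell=d_i'$ (the latter because $v(c_0)=v(\prod(-\alpha)) = -d_i' m_i$). Hence every point of $B(g_i)$ lies on or above the segment from $(0,-d_i' m_i)$ to $(d_i', 0)$ of slope $m_i$, and both endpoints lie on it; this is precisely the assertion that $\NP(g_i)$ is a single segment of slope $m_i$.

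Finally, I would apply Lemma~\ref{NP-lem-Dumas-product} to $f = b_k\prod g_i$: the slope sequence of $\NP(f)$ is the sorted concatenation of the slope sequences of the $\NP(g_i)$. Each $\NP(g_i)$ contributes $d_i'$ copies of slope $m_i$, while by hypothesis $\NP(f)$ has exactly $d_i$ copies of slope $m_i$ (and the $m_i$ are distinct, so there is no mixing between the factors). Therefore $d_i'=d_i$, completing the proof. I expect the main obstacle to be the descent step showing $g_i\in K[x]$; this relies crucially on the uniqueness of the extension of the $p$-adic valuation to $\bar{K}$, a standard but essential input that distinguishes this local setting from the global one.
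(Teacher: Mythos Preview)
The paper does not supply its own proof of this lemma: it is listed among ``a few well known properties'' with a blanket citation to Dumas \cite{dumas1906quelques}, so there is no in-paper argument to compare against. Your proposal is a correct and standard self-contained proof, relying only on the two preceding lemmas (which the paper likewise cites out) together with the uniqueness of the extension of $v$ to $\bar K$; the descent step via Galois invariance is valid in this setting because $K/\QQ_p$ is finite and hence complete. One cosmetic point: your $g_i$ are monic, so your factorization reads $f=b_k\prod_i g_i$ rather than $f=\prod_i g_i$ as in the statement; to match the statement exactly, absorb the leading coefficient $b_k$ into, say, $g_1$, which shifts $\NP(g_1)$ vertically by $v(b_k)$ without changing its slope.
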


Recall that a polynomial $f$ is Eisenstein at $p$, if $C^{(p)}(f) = \{(0,1), (k,0)\}$ and then $\NP(f)$ consists of one line with slope $-1/\deg f$. The Eisenstein criterion says that in this case $f$ is irreducible. 
More generally we have
\begin{lem}\label{NP-lem-Eisenstien-large-cycle}
    Let $f\in \QQ[x]$ and $p$ be a prime. Assume that $\NP^{(p)}(f)$ contains a segment whose end points are $(c, d)$ and $(c+t,d-s)$ such that $\gcd(s,t)=1$ and $p\nmid t$. Then the Galois group of $f$ over $\QQ_p$ contains an element having a $t$-cycle in its factorization to disjoint cycles. In particular $G_f$ contains such an element. 
\end{lem}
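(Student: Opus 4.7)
The strategy is to isolate the factor of $f$ over $\QQ_p$ corresponding to the prescribed segment of $\NP^{(p)}(f)$ and then analyze the inertia action on its roots. Applying Lemma~\ref{NP-lem-Dumas-factorization} over $\QQ_p$, I first split off a factor $g\in\QQ_p[x]$ of degree $t$ whose Newton polygon is a single segment of slope $-s/t$. By Lemma~\ref{NP-lem-Newton} every root $\alpha$ of $g$ satisfies $v(\alpha)=s/t$, and since $\gcd(s,t)=1$ while $v(\QQ_p^{\times})=\ZZ$, the ramification index $e(\QQ_p(\alpha)/\QQ_p)$ must be divisible by $t$. Combined with $[\QQ_p(\alpha):\QQ_p]\leq \deg g = t$, this forces $g$ to be irreducible over $\QQ_p$ and $\QQ_p(\alpha)/\QQ_p$ to be totally ramified of degree $t$.

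Since $p\nmid t$, the extension $\QQ_p(\alpha)/\QQ_p$ is tamely ramified, so the splitting field $L$ of $g$ over $\QQ_p$ is tame and its inertia subgroup $I\subseteq G:=\Gal(L/\QQ_p)$ is cyclic. Setting $H=\Gal(L/\QQ_p(\alpha))$, the fact that $\QQ_p(\alpha)/\QQ_p$ has residue degree $1$ yields $G=H\cdot I$, whence $t=[G:H]=[I:I\cap H]$. Thus $I$ acts transitively on the $t$ roots of $g$ via the cyclic quotient $I/(I\cap H)$, and a generator $\sigma$ of $I$ must permute the roots of $g$ as a single $t$-cycle.

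Finally, letting $F$ be the splitting field of $f$ over $\QQ_p$, the restriction map $\Gal(F/\QQ_p)\to G$ sends inertia onto inertia; any lift $\tilde\sigma$ of $\sigma$ therefore acts as a $t$-cycle on the roots of $g$, which are a subset of the roots of $f$, so its disjoint cycle factorization on the roots of $f$ contains a $t$-cycle. The second assertion then follows from the standard identification of the decomposition group at any prime of the global splitting field of $f$ above $p$ with $\Gal(F/\QQ_p)$, which embeds this group into $G_f$ compatibly with the action on roots. The main obstacle I anticipate is the justification of the equality $G=H\cdot I$ in the middle step, which rests on the local decomposition--inertia formalism applied to the totally ramified $\QQ_p(\alpha)/\QQ_p$; once this is in hand the Dumas factorization isolates a clean segment polynomial and the remainder of the argument is essentially mechanical.
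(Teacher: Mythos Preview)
Your proposal is correct and follows essentially the same route as the paper's proof: Dumas factorization to isolate $g$, Newton's lemma to force $v(\alpha)=s/t$ and hence total ramification of degree $t$, tameness from $p\nmid t$ giving a cyclic inertia group, and then lifting a generator to the splitting field of $f$. You supply more detail than the paper does---notably the explicit $G=H\cdot I$ argument for transitivity of inertia and the passage to $G_f$ via decomposition groups---but the underlying strategy is identical.
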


\begin{proof}
    By Lemma~\ref{NP-lem-Dumas-factorization}, $f=gh$, where $g,h\in \QQ_p[x]$, $\deg g=t$ and $\NP(g)$ is a line with slope $-\frac{s}{t}$. By Lemma~\ref{NP-lem-Newton}, a root $\alpha$ of $g$ has valuation $\frac{s}{t}$. 
    In particular, if $\alpha$ is a root of $g$, we get that $[\QQ_p(\alpha):\QQ_p]=t$, and this extension is totally ramified. Hence the inertia group acts transitively on the roots of $g$.
    On the other hand, since $p\nmid t$, the ramification group of the splitting field of $g$ is tame, hence the inertia group acts cyclically on the roots of $g$. Thus a lifting of a generator of the inertia group of $g$ to the splitting field of $f$ is the desired element. 
\end{proof}

\begin{lem}
\label{NP-lem-Tilde-1}
Let $f=\sum_{j=0}^{k} b_j x^j \in K[x]$ be of degree $k$, with $f(0) \neq 0$, and $g=\sum_{j=0}^{k} a_j b_j x^j\in [f]$. If 
$(c,d) \in C(f)$ and $v(a_c)=0$, then 
$(c,d) \in C(g)$.
\end{lem}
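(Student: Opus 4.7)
The plan is to sandwich the height of $\NP(g)$ at column $c$ between two bounds that both collapse to $d$.

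First I would establish the lower bound $\NP(g)(c)\geq d$. Since $a_j\in\ZZ$ gives $v(a_j)\geq 0$, we have $v(a_jb_j)=v(a_j)+v(b_j)\geq v(b_j)$ for every $0\leq j\leq k$; geometrically, each point of $B(g)$ lies weakly above the corresponding point of $B(f)$. Because the lower convex hull is a monotone operation under coordinate-wise raising (with matching endpoint heights, the case used in the applications of the lemma), this implies $\NP(g)(t)\geq \NP(f)(t)$ for every $t\in[0,k]$, and specialising to $t=c$ gives $\NP(g)(c)\geq d$.

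Next I would obtain a matching upper bound using the hypothesis $v(a_c)=0$: this yields $v(a_cb_c)=v(b_c)$, so $(c,v(b_c))\in B(g)$ and hence $\NP(g)(c)\leq v(b_c)$. To close the gap I would first treat the case where $(c,d)$ is a vertex of $\NP(f)$: then $(c,d)\in B(f)$, which forces $d=v(b_c)$, so the two bounds match and $\NP(g)(c)=d$. For a general lattice point lying in the interior of a segment of $\NP(f)$, I would identify the two adjacent vertices $(c_1,v(b_{c_1}))$ and $(c_2,v(b_{c_2}))$ of $\NP(f)$ bracketing $c$; by the vertex case both of these vertices remain in $C(g)$, and convexity of $\NP(g)$ squeezed between them on $[c_1,c_2]$ then forces $\NP(g)$ to agree with the corresponding segment of $\NP(f)$ throughout that interval, so $\NP(g)(c)=d$ as well.

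The main obstacle is the monotonicity assertion in the first step — one must check carefully that raising the points of $B(f)$ column-by-column (while keeping the leftmost and rightmost heights in sync) weakly raises the lower convex hull at every abscissa, which is the place where the endpoint convention for $\NP$ has to be handled. Once this is in hand the upper bound is essentially automatic, and the vertex-to-interior reduction is a routine convexity argument.
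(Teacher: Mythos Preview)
Your sandwich $\NP(f)(c)\leq \NP(g)(c)\leq v(a_cb_c)=v(b_c)$ is exactly the paper's argument, and your vertex case (where $v(b_c)=d$, so the bounds collapse) is correct and is all the paper actually writes down. One minor remark: the lower bound $\NP(f)\leq\NP(g)$ needs no endpoint hypothesis. Each vertex of $\NP(g)$ lies in $B(g)$, hence weakly above the corresponding point of $B(f)$, hence weakly above the convex function $\NP(f)$; the linear pieces of $\NP(g)$ then dominate $\NP(f)$ by convexity of the latter. So your flagged ``main obstacle'' is not one.

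The genuine gap is your interior-point reduction. Invoking the vertex case at the bracketing vertices $(c_i,v(b_{c_i}))$ requires $v(a_{c_1})=v(a_{c_2})=0$, which the hypothesis does not give you---only $v(a_c)=0$ is assumed. This cannot be repaired, because the statement is actually false at interior lattice points: take $f=p^3(1+x+x^2)+x^3$, so $\NP(f)$ is the segment from $(0,3)$ to $(3,0)$ and $(1,2)\in C(f)$; with $a=(p^{10},1,1,1)$ one has $v(a_1)=0$, yet $\NP(g)$ has a vertex at $(1,3)$ and $(1,2)\notin C(g)$. The paper's one-line upper bound $\NP(g)(c)\leq\NP(f)(c)$ likewise tacitly uses $v(b_c)=d$, i.e.\ that $(c,v(b_c))$ already sits on $\NP(f)$; since the only application (Lemma~\ref{NP-lem-Tilde-2}) invokes the lemma at segment endpoints, this suffices for the paper's purposes, but your vertex-to-interior reduction does not go through.
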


\begin{proof}
Regardless of the restriction on $a_{c}$, it is always true that 
\[
\NP(f) \leq \NP(g).
\]
By assumption on $a_{c}$
\[
\NP(g)(c) \leq \NP(f)(c) = d.
\]
It follows that $\NP(g)(c)=d$ as required.
\end{proof}

\begin{lem}
\label{NP-lem-Tilde-2}
Let $f=\sum_{j=0}^{k} b_j x^j \in K[x]$ be of degree $k$, with $f(0) \neq 0$, and $g=\sum_{j=0}^{k} a_j b_j x^j\in [f]$. If 
$\NP(f)$ contains a line of slope $-\frac{s}{t}$ from $(c,d)$ to $(c+t,d-s)$ and $v(a_{c}),v(a_{c+t})=0$, then $\NP(g)$ contains the same line from 
$(c,d)$ to $(c+t,d-s)$.
\end{lem}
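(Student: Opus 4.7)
The plan is to combine Lemma~\ref{NP-lem-Tilde-1} applied at both endpoints with the general convexity of the Newton polygon. First, I would apply Lemma~\ref{NP-lem-Tilde-1} twice, to the lattice points $(c,d)$ and $(c+t,d-s)$ of $C(f)$; since the hypothesis gives $v(a_c)=v(a_{c+t})=0$, this immediately places both points in $C(g)$.

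Next, I would invoke the general inequality $\NP(f)\leq \NP(g)$, which was used tacitly in the proof of Lemma~\ref{NP-lem-Tilde-1}. This holds because $a_j\in\ZZ$ forces $v(a_jb_j)\geq v(b_j)$ for every $j$, so the finite point set $B(g)$ sits weakly above $B(f)$ and therefore so do their lower convex hulls. Over the interval $[c,c+t]$, the polygon $\NP(f)$ is by hypothesis exactly the line segment from $(c,d)$ to $(c+t,d-s)$, so $\NP(g)$ lies on or above this segment there.

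Finally, I would use convexity from the other side: since $\NP(g)$ is the boundary of a lower convex hull, its graph between any two of its own points lies on or below the chord connecting them. As both $(c,d)$ and $(c+t,d-s)$ belong to $\NP(g)$ by the first step, $\NP(g)$ on $[c,c+t]$ lies on or below the same segment. Combining the two inequalities forces equality on $[c,c+t]$, which is exactly the conclusion.

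There is no real obstacle here; the only subtle point is keeping the direction of the convexity inequality straight (lower convex hull means the graph of $\NP(g)$ lies below its chords, not above), and noticing that the hypotheses force the two a priori unrelated inequalities $\NP(g)\geq \NP(f)$ and $\NP(g)\leq \text{chord}$ to pinch together on $[c,c+t]$.
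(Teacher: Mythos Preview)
Your proof is correct and follows essentially the same approach as the paper: apply Lemma~\ref{NP-lem-Tilde-1} at both endpoints to get $(c,d),(c+t,d-s)\in C(g)$, use $\NP(f)\leq\NP(g)$ to bound $\NP(g)$ from below on $[c,c+t]$, and then invoke convexity of $\NP(g)$ to bound it from above by the chord, forcing equality. The paper's write-up is slightly terser, compressing your last two steps into the single phrase ``by convexity of $\NP(g)$''.
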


\begin{proof}
By Lemma~\ref{NP-lem-Tilde-1}, $\NP(g)(c)=\NP(f)(c)=d$ and $\NP(g)(c+t) = \NP(c+t)=d-s$. On the other hand, we always have 
\[
\NP(f)\leq \NP(g).
\]
By convexity of $\NP(g)$ this means that $\NP(g)$ and $\NP(f)$ coincide on the interval $[c,c+t]$, as needed. 
\end{proof}

\section{Smooth numbers}
Recall a natural number $m \in \NN$ is called \emph{$k$-smooth} if every prime divisor $p \mid m$ is $\leq k$.
The number of all $k$-smooth numbers smaller than $x$ is denoted by
\[
\psi(x,k):=\# \{ m\leq x:m \textnormal{ is } k\textnormal{-smooth}\}.
\]

An exact asymptotic for $\psi(x,k)$ in various uniformity regimes is known, cf.\ \cite[Chapter 3]{norton1971numbers} for a survey of such results. We bring below an upper bound, having the advantages of being uniform in $x$ and $k$ and of having an easy proof of which we have learnt from  Koukoulopoulos. 

\begin{lem}
\label{SM-lem-basic}As $x$ tends to infinity we have
\[
\psi(x,k) \leq \frac{x}{\log x} (\log k+O(1)).
\]
\end{lem}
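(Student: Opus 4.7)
The plan is to prove the bound via a short partial summation identity combined with Mertens' theorem. Starting from $\log n = \int_{1}^{n}dt/t$ and exchanging the order of the finite sum over $k$-smooth $n \leq x$ with the integral, I would obtain
\[
\psi(x,k)\log x \;=\; \sum_{\substack{n\leq x\\ n\text{ is }k\text{-smooth}}}\log n \;+\; \int_{1}^{x}\frac{\psi(t,k)}{t}\, dt,
\]
so the desired bound reduces to controlling these two pieces on the right.

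The integral is easy: the trivial inequality $\psi(t,k)\leq t$ gives $\int_1^x \psi(t,k)/t\, dt \leq x$, and after dividing by $\log x$ this is absorbed into the $O(1)$ error term in the statement of the lemma.

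The weighted sum is the heart of the proof. Here I would factor each $k$-smooth $n$ as $n=\prod_{p\leq k}p^{v_p(n)}$, write $\log n = \sum_{p\leq k} v_p(n)\log p$, and interchange the order of summation (grouping the contributions to $v_p(n)$ by the prime powers $p^a\mid n$) to get
\[
\sum_{\substack{n\leq x\\ n\text{ smooth}}}\log n \;=\; \sum_{p\leq k}\log p\sum_{a\geq 1}\psi(x/p^a,k).
\]
Using $\psi(x/p^a,k)\leq x/p^a$ and summing the geometric series gives $\sum_{a\geq 1}\psi(x/p^a,k)\leq x/(p-1)$, so the weighted sum is at most $x\sum_{p\leq k}\log p/(p-1)$. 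By Mertens' theorem $\sum_{p\leq k}\log p/p=\log k + O(1)$, and the correction $\sum_{p}\log p/(p(p-1))$ converges, so the weighted sum is $\leq x(\log k + O(1))$.

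Combining the two bounds yields $\psi(x,k)\log x \leq x(\log k + O(1))$, which rearranges to the claim. There is no real obstacle: the whole argument is a direct application of partial summation together with the Chebyshev/Mertens machinery. Because the target leading term grows only like $\log k$, the crude inequality $\psi(t,k)\leq t$ is strong enough at every step, with no need to bootstrap the estimate back into itself.
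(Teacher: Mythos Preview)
Your proof is correct and is essentially the same argument as the paper's: both use partial summation to write $\psi(x,k)\log x$ as $\sum_{n\ \text{smooth}}\log n$ plus an integral bounded trivially by $x$, then expand $\log n$ over prime powers and apply Mertens' theorem. The only cosmetic differences are that the paper truncates the prime-power sum at $p^r\leq x$ while you sum the full geometric series, and the paper phrases the partial summation step directly as an inequality; neither changes the substance.
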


\begin{proof}
Let $S(x,k)$ denote the set of $k$-smooth numbers smaller than $x$, so that 
\[
\psi(x,k)=|S(x,k)|.
\] 
Consider 
\[
\displaystyle{\sum_{s \in S(x,k)}}\log s.
\]
On the one hand, by partial summation
\[
\displaystyle{\sum_{s \in S(x,k)}} \log s = 
\psi(x,k)\log x - \int_{1}^{x} \frac{\psi(u,k)}{u} du \geq
\psi(x,k)\log x - x.
\]
On the other hand, by substituting 
$\log s=\displaystyle{\sum _{\underset{p\textnormal{ prime}}{p^{r}\mid s}}}\log p$
we get
\[
\begin{aligned}
\displaystyle{\sum_{s \in S(x,k)}}\log s &=
\displaystyle{\sum_{s \in S(x,k)}}\ \displaystyle{\sum _{\underset{p\textnormal{ prime}}{p^{r} \mid s}}} \log p =
\displaystyle{\sum_{\underset{p\textnormal{ prime}}{p^{r} \leq x,p \leq k}}}\log p \displaystyle{\sum _{t \in S(\frac{x}{p^r},k)}}1 \\
&\leq \displaystyle{\sum_{\underset{p\textnormal{ prime}}{p^{r} \leq x,p \leq k}}} \frac{x \log p}{p^{r}}
\leq x (\log k + O(1)).
\end{aligned}
\]
(The last inequality follows from Mertens' first theorem $\sum_{p\leq k} \frac{\log p}{p}\leq \log k+2$.)
Combining the two inequalities and rearranging finishes the proof.
\end{proof}

\begin{prop}
\label{SM-prop-non-square-probability}
Let $m$ be a uniform random integer in $[1,x]$. Let $k=k(x)$. Then
\[
\mathbb{P}\left(\exists p>k:v_{p}\left(m\right)=1\right)=1-O\left(\frac{\log k}{\log x}+\frac{1}{k}\right)
\]
with absolute implied constant. 
\end{prop}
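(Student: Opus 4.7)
The plan is to bound the probability of the complementary event, namely that \emph{no} prime $p>k$ satisfies $v_p(m)=1$. I would decompose this bad event as the union of two sub-events: (a) $m$ has no prime factor exceeding $k$ at all, i.e.\ $m$ is $k$-smooth; and (b) there exists some prime $p>k$ with $v_p(m)\geq 2$, i.e.\ $p^2\mid m$. This is an exhaustive split, because if (a) fails then some prime $p>k$ divides $m$, and if additionally $v_p(m)\neq 1$ then $v_p(m)\geq 2$ so (b) holds.

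For event (a), I would invoke Lemma~\ref{SM-lem-basic} directly:
\[
    \mathbb{P}(m \text{ is } k\text{-smooth}) \,=\, \frac{\psi(x,k)}{x} \,\leq\, \frac{\log k+O(1)}{\log x} \,=\, O\!\left(\frac{\log k}{\log x}\right).
\]

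For event (b), I would apply a straightforward union bound over primes $p>k$, using that the number of multiples of $p^2$ in $[1,x]$ is at most $x/p^2$:
\[
    \mathbb{P}\bigl(\exists p>k: p^2\mid m\bigr) \,\leq\, \sum_{p>k} \frac{\lfloor x/p^2\rfloor}{x} \,\leq\, \sum_{p>k}\frac{1}{p^2} \,\leq\, \sum_{n>k}\frac{1}{n^2} \,=\, O\!\left(\frac{1}{k}\right).
\]

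Adding the two bounds gives the stated error term, and the constants involved are absolute. The only conceptually nontrivial input is Lemma~\ref{SM-lem-basic}; the rest is a clean union bound and the elementary tail estimate $\sum_{n>k}n^{-2}=O(1/k)$. There is no real obstacle — the main thing to verify carefully is the case split, ensuring that the two sub-events together cover every instance in which no prime $p>k$ appears with multiplicity exactly one in $m$.
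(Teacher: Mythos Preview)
Your proof is correct and matches the paper's approach essentially verbatim: both bound the $k$-smooth probability via Lemma~\ref{SM-lem-basic} and the probability that some $p>k$ satisfies $p^2\mid m$ via the tail sum $\sum_{n>k}n^{-2}=O(1/k)$. The only cosmetic difference is that the paper phrases the first step as $\mathbb{P}(\exists p>k:v_p(m)\geq 1)=1-O(\log k/\log x)$ and then subtracts the second bound, whereas you work directly with the complementary event.
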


\begin{proof}
By Lemma \ref{SM-lem-basic},
\[
\mathbb{P}\left(\exists p>k:v_{p}(m)\geq 1\right)=1-O\left(\frac{\log k}{\log x}\right).
\]
On the other hand, 
\[
    \begin{aligned}
    \mathbb{P}\left(\exists p>k: v_p(m)>1\right) 
    &\leq \frac{|\{m\leq x: \exists p>k \textnormal{ s.t. } p^{2}\mid m\}| }{x} \\
    & \leq \frac{1}{x}\displaystyle{\sum_{\underset{p \textnormal{ prime}}{k<p\leq \sqrt{x}}}} \frac{x}{p^{2}}
    \leq \displaystyle{\sum_{\underset{p \textnormal{ prime}}{k<p}}} \frac{1}{p^{2}}
    \leq \displaystyle{\sum_{n=k}^{\infty}} \frac{1}{n^{2}}
    =O\left(\frac{1}{k}\right).
    \end{aligned}
\]
The result follows.
\end{proof}

We shall need the following refined version of Proposition~\ref{SM-prop-non-square-probability}.

\begin{cor}
\label{SM-cor-unbounded-non-square-probability}
Let $m$ be a uniform random integer in $[1,x]$. Let $k=k(x)$ and $t=t(x)$ be functions of $x$.
Then
\[
\mathbb{P}\left(\forall\ 0\leq i< t:\ \exists p_{i}>k \textnormal{ prime s.t. }v_{p_{i}}(m-i)=1\right)=1-O\left(\frac{t\log k}{\log x}+\frac{t}{k}\right).
\]
\end{cor}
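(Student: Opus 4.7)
The plan is to reduce this corollary to the single-variable Proposition~\ref{SM-prop-non-square-probability} by a direct union bound over the $t$ shifts, after checking that the estimate of that proposition is in fact uniform in the shift.

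First I would define, for each $0\leq i<t$, the event
\[
    E_i = \{\exists p>k \text{ prime s.t. } v_p(m-i) = 1\},
\]
so that the event whose probability we want to estimate is $\bigcap_{i=0}^{t-1} E_i$, with complement $\bigcup_{i=0}^{t-1} E_i^c$. By the union bound,
\[
    \mathbb{P}\Bigl(\bigcup_{i=0}^{t-1} E_i^c\Bigr) \leq \sum_{i=0}^{t-1} \mathbb{P}(E_i^c),
\]
so it suffices to show that $\mathbb{P}(E_i^c) = O(\log k/\log x + 1/k)$ with an implied constant uniform in $i$.

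For each fixed $i$, the random variable $m-i$ takes values in the interval $[1-i, x-i]$ as $m$ ranges uniformly over $[1,x]$. The argument in Proposition~\ref{SM-prop-non-square-probability} then applies essentially verbatim: the number of $k$-smooth integers in any interval of length $x$ is still bounded by $\psi(x,k)+O(1)$, which by Lemma~\ref{SM-lem-basic} gives $\mathbb{P}(m-i \text{ is $k$-smooth}) = O(\log k/\log x)$; and the count of integers in this interval divisible by $p^2$ for some prime $p>k$ is bounded by $\sum_{p>k} x/p^2 + O(1) = O(x/k)$, yielding $\mathbb{P}(\exists p>k: v_p(m-i)\geq 2) = O(1/k)$. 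The edge case $m-i\leq 0$ contributes at most $i/x \leq t/x$, which is absorbed into $O(1/k)$ under the reasonable assumption $k\leq x$ (otherwise the corollary's bound is trivial). Combining as in the proposition gives $\mathbb{P}(E_i^c) = O(\log k/\log x + 1/k)$.

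Summing the bound $t$ times produces
\[
    \mathbb{P}\Bigl(\bigcup_{i=0}^{t-1} E_i^c\Bigr) = O\Bigl(\frac{t\log k}{\log x} + \frac{t}{k}\Bigr),
\]
which is the claimed estimate. There is no real obstacle here; the only point that needs attention is the uniformity of the constants in Proposition~\ref{SM-prop-non-square-probability} as $m$ is replaced by the shifted variable $m-i$, and the handling of the trivial edge contribution from $m\leq t$.
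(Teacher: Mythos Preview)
Your proposal is correct and follows essentially the same approach as the paper: apply Proposition~\ref{SM-prop-non-square-probability} to each shifted variable $m-i$ and take a union bound over the $t$ shifts. The paper handles the edge cases slightly differently---it first disposes of the trivial range $t\geq \log x$ and then uses $\log(x-i)\sim\log x$---whereas you absorb the boundary contribution $m\leq i$ directly; but this is a cosmetic difference, not a substantive one.
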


\begin{proof}
If $t\geq \log x$, then the statement is trivial. Next assume that $t\leq 
\log x$, hence  $\log (x-i)\sim \log x$, as $x\to \infty$.
By Proposition~\ref{SM-prop-non-square-probability} and the union bound we have
\begin{multline*}
\mathbb{P}\left(\forall\ 0\leq i< t:\ \exists p_{i}>k \textnormal{ s.t. }v_{p_{i}}(m-i)=1\right)\geq 
1-\displaystyle{\sum_{i=0}^{t-1}}O\left(\frac{\log k}{\log (x-i)}+\frac{1}{k}\right)
\\=1-\displaystyle{\sum_{i=0}^{t-1}}O\left(\frac{\log k}{\log x}+\frac{1}{k}\right) 
=1-O\left(\frac{t\log k}{\log x}+\frac{t}{k}\right),
\end{multline*}
as needed.
\end{proof}

\section{Proof of the technical propositions}
\label{sec-proof-propositions}
As usual, and throughout this section, $m$ is a uniformly chosen integer in the interval $[1,x]$.

The following is a version of Proposition~\ref{MAIN-cor-irreducibility-probability} with explicit error term.
\begin{prop}\label{RE-thm-irreducibility-probability}
    Let $\kappa=\kappa(x)$ be a function of $x$. Let ${A_{k}^{p}}'$ be the event that for every $1\leq k\leq \kappa$ and every $f\in [f_{m-k,m}]$ with $p\nmid a_0a_k$ the Galois group $G_f$ contains a $k$-cycle. 
    Then
    \[
        \mathbb{P} \left(\bigcup_{p>\kappa}\bigcap_{k=1}^{\kappa} A_k^p\right)
        \geq 
        \mathbb{P} \left(\bigcup_{p>\kappa}\bigcap_{k=1}^{\kappa} {A_k^p}'\right) 
        =1-O\left(\frac{\log \kappa}{\log x}+\frac{1}{\kappa}\right).
    \]
\end{prop}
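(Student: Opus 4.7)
The plan is to reduce the statement to the smooth number estimate Proposition~\ref{SM-prop-non-square-probability} through the Newton polygon machinery of Section~3. The first inequality is essentially tautological: if $G_f$ contains a $k$-cycle, then it acts faithfully and transitively on $k$ distinct objects, so the degree-$k$ polynomial $f$ must have $k$ distinct roots and be irreducible. Hence ${A_k^p}' \subseteq A_k^p$ for every $p$ and $k$, giving the first inequality after taking the union over $p$ and intersection over $k$.

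For the second part, I would establish the containment of events
\[
    \{m : \exists\, p > \kappa \text{ prime with } v_p(m) = 1\} \;\subseteq\; \bigcup_{p > \kappa} \bigcap_{k=1}^\kappa {A_k^p}',
\]
which combined with Proposition~\ref{SM-prop-non-square-probability} (applied with $\kappa$ in place of $k$) immediately yields the claimed error $O(\log \kappa/\log x + 1/\kappa)$. To prove the containment, fix such an $m$ and $p$, and let $1 \leq k \leq \kappa$. Writing $f_{m-k,m} = \sum_{i=0}^k b_i x^i$ with $b_i = 1/(m-k+i)!$, one has the telescoping identity $v_p(b_i) - v_p(b_{i-1}) = -v_p(m-k+i)$. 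Since the window $\{m-k+1,\ldots,m\}$ has diameter $k-1 < p$, it contains at most one multiple of $p$; by hypothesis that multiple is $m$ itself with $v_p(m) = 1$. Hence $v_p(b_i)$ is constant for $i = 0,\ldots,k-1$ and drops by exactly $1$ at $i=k$, so $\NP^{(p)}(f_{m-k,m})$ is the single segment from $(0,c)$ to $(k,c-1)$ of slope $-1/k$, where $c = -v_p((m-k)!)$.

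Now for any $f = \sum a_i b_i x^i \in [f_{m-k,m}]$ with $p \nmid a_0 a_k$, Lemma~\ref{NP-lem-Tilde-2} forces $\NP^{(p)}(f)$ to contain this same full-width segment, so it equals it. Because $\gcd(1,k) = 1$ and $p > \kappa \geq k$ gives $p \nmid k$, Lemma~\ref{NP-lem-Eisenstien-large-cycle} yields a $k$-cycle in $G_f$. This is exactly ${A_k^p}'$, and the key point is that the same prime $p$ works for every $k \leq \kappa$.

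I do not foresee a genuine obstacle; the argument is a direct translation of a $p$-adic condition on the single integer $m$ into a Newton polygon shape that is uniform in $k$. The only subtlety worth flagging is the role of the bound $p > \kappa$ rather than $p > k$: choosing $p$ above the outer cutoff is precisely what allows a single prime to witness ${A_k^p}'$ for \emph{every} $k$ in the intersection simultaneously, matching the outer $\bigcup_p \bigcap_k$ structure of the event whose probability we are estimating.
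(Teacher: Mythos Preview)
Your proof is correct and follows essentially the same route as the paper: invoke Proposition~\ref{SM-prop-non-square-probability} to obtain a prime $p>\kappa$ with $v_p(m)=1$, observe that $p$ then divides no other element of the window $[m-k+1,m]$ for any $k\leq\kappa$, deduce that the $p$-adic Newton polygon is a single segment of slope $-1/k$, and apply Lemma~\ref{NP-lem-Eisenstien-large-cycle}. The only cosmetic difference is that the paper multiplies through by $m!$ before computing the Newton polygon, whereas you work directly with the rational coefficients $b_i=1/(m-k+i)!$ and then invoke Lemma~\ref{NP-lem-Tilde-2} explicitly to pass from $f_{m-k,m}$ to a general $f\in[f_{m-k,m}]$; your version is slightly more detailed at that step.
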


\begin{proof}
    Obviously ${A_k^p}'\subseteq A_k^p$, hence the left hand side inequality. 
    Proposition~\ref{SM-prop-non-square-probability} implies that with probability $1-O\Big(\frac{\log \kappa}{\log x}+\frac{1}{\kappa}\Big)$ there exists a prime $p>\kappa$ such that $v_p(m)=1$. Thus $p\nmid m-j$, for all $1\leq j\leq p-1$ and in particular for all $1\leq j\leq \kappa-1$. 
    Take $f\in [f_{m-k,m}]$ with $p\nmid a_0a_k$ and consider $g=m!f$. 
    \begin{figure}[h]
        \centering
        \fbox{\includegraphics[width=5cm]{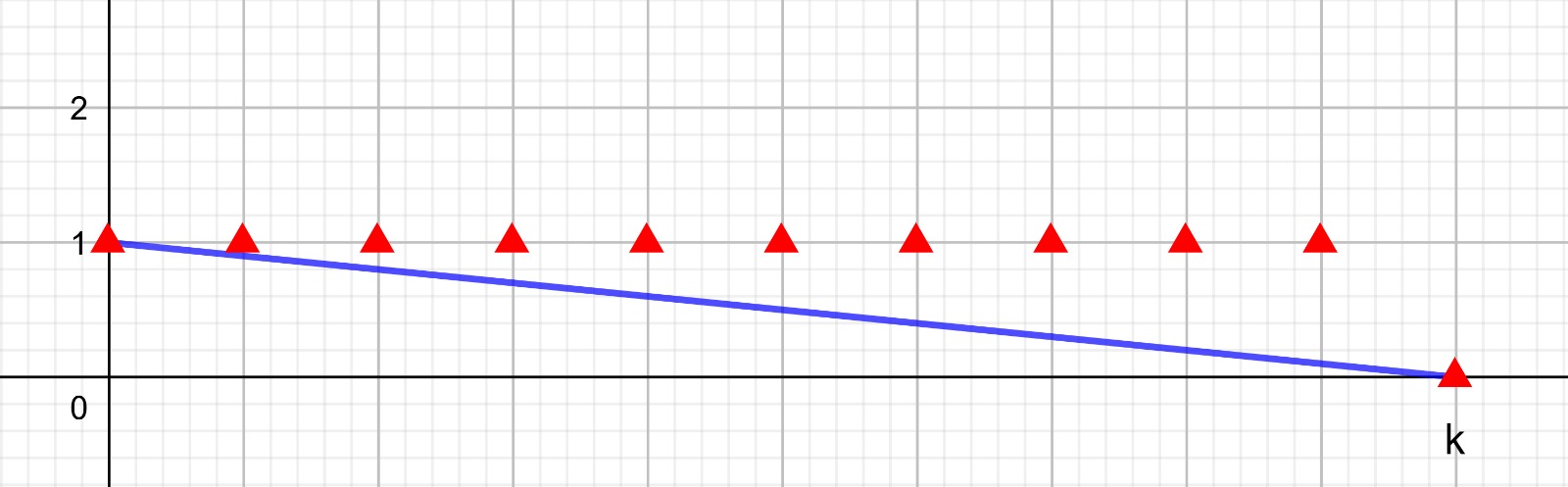}}
        \caption{$\NP(g)$}
        \label{fig:my_label}
    \end{figure}
    Then the Newton polygon consists of a single segment of slope $-1/k$. By Lemma~\ref{NP-lem-Eisenstien-large-cycle},  $G_g$ and hence also $G_f$ contain a $k$-cycle. 
\end{proof}

\begin{proof}[Proof of    Proposition~\ref{MAIN-cor-irreducibility-probability}]
    Apply Proposition~\ref{RE-thm-irreducibility-probability} with $\kappa = \max\{ k_{\max}, \log x\}$. In this case the error term tends to $0$ as $x$ tends to $\infty$. 
\end{proof}

Next we prove a version of Proposition~\ref{MAIN-prop-not-Ak-probability} with explicit error terms. 

\begin{prop}
    \label{prop-no-Ak-ET}
    Let $\kappa=\kappa(x)$ be a function of $x$. Let ${B_k^p}'$ be the event that for every $f\in [f_{m-k,m}]$ with $p\nmid a_0a_{2\lfloor k/2\rfloor} $ the Galois group $G_f$ contains a $2\lfloor k/2\rfloor$-cycle.  Then 
    \[
        \mathbb{P} \left(\bigcup_{p>\kappa} \bigcap_{k=1}^{\kappa} B_k^p \right)\geq     
        \mathbb{P} \left(\bigcup_{p>\kappa} \bigcap_{k=1}^{\kappa} {B_k^p}' \right) = 1 - O\left(\frac{\log \kappa}{\log x} + \frac{1}{\kappa}\right).
    \]
\end{prop}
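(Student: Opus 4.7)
The plan is to follow the Newton-polygon template of Proposition~\ref{RE-thm-irreducibility-probability}, with a parity-sensitive choice of prime witness: excluding $A_k$ requires an odd permutation, i.e.\ a cycle of \emph{even} length. For even $k$ a $k$-cycle coming from a single Eisenstein segment already has the correct parity, but for odd $k$ a $k$-cycle is an even permutation, so we must instead exhibit a $(k-1)$-cycle in $G_f$.

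First I would invoke Corollary~\ref{SM-cor-unbounded-non-square-probability} with $t=2$: on an event of probability $1-O(\log\kappa/\log x+1/\kappa)$ there exist primes $p_0,p_1>\kappa$ with $v_{p_0}(m)=1$ and $v_{p_1}(m-1)=1$. Since $p_0,p_1>\kappa\geq k$, each such prime divides exactly one element of $\{m-k+1,\ldots,m\}$, the one named in its defining relation.

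Next, for $f\in[f_{m-k,m}]$, set $g=m!f=\sum_{i=0}^{k}a_{i}b_{i}x^{i}$ with $b_i=\prod_{j=m-k+i+1}^{m}j$. When $k$ is even, take $p=p_0$: then $v_p(b_i)=1$ for $i<k$ and $v_p(b_k)=0$, and the hypothesis $p\nmid a_0a_k$ together with Lemmas~\ref{NP-lem-Tilde-1}--\ref{NP-lem-Tilde-2} forces $\NP^{(p)}(g)$ to be the single Eisenstein segment from $(0,1)$ to $(k,0)$; Lemma~\ref{NP-lem-Eisenstien-large-cycle} then deposits a $k$-cycle in $G_f$, odd since $k$ is even, so ${B_k^p}'$ holds. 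When $k$ is odd, take $p=p_1$: only $m-1$ in the range is divisible by $p$, giving $v_p(b_i)=1$ for $0\leq i\leq k-2$ and $v_p(b_{k-1})=v_p(b_k)=0$; the hypothesis $p\nmid a_0a_{k-1}$ pins down the vertices $(0,1)$ and $(k-1,0)$, so $\NP^{(p)}(g)$ contains the length-$(k-1)$ segment of slope $-1/(k-1)$ joining them (followed by a final nonnegative-slope segment of length $1$). Lemma~\ref{NP-lem-Eisenstien-large-cycle} then supplies a $(k-1)$-cycle in $G_f$, odd since $k-1$ is even, and again ${B_k^p}'$ holds.

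The step I expect to require the most care is the bookkeeping that converts this pair of parity-dependent witnesses $p_0,p_1$ into the single-prime union $\bigcup_{p>\kappa}\bigcap_{k=1}^{\kappa}{B_k^p}'$ demanded by the proposition -- the analogous reconciliation in Proposition~\ref{MAIN-cor-irreducibility-probability} is easier because there a single $p$ with $v_p(m)=1$ suffices for all $k$ at once, whereas here the even/odd dichotomy is intrinsic to the target cycle length. The probability bound inherited directly from Corollary~\ref{SM-cor-unbounded-non-square-probability} with $t=2$ matches the claimed $1-O(\log\kappa/\log x+1/\kappa)$.
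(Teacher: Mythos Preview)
Your Newton-polygon argument is exactly the method the paper intends; the paper itself omits the details, saying only that the proof is ``similar to the proof of Proposition~\ref{RE-thm-irreducibility-probability}'' and leaving them to the reader. Your parity split---using $p_0$ with $v_{p_0}(m)=1$ for even $k$ and $p_1$ with $v_{p_1}(m-1)=1$ for odd $k$---is the natural way to manufacture the required $2\lfloor k/2\rfloor$-cycle, and your invocation of Corollary~\ref{SM-cor-unbounded-non-square-probability} with $t=2$ gives the stated error term.

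The concern you flag in your final paragraph is genuine and not merely bookkeeping: neither $p_0$ nor $p_1$ alone witnesses ${B_k^p}'$ simultaneously for \emph{all} $1\le k\le\kappa$, so the two-prime argument yields $\bigcap_{k}\bigcup_{p}{B_k^p}'$ rather than the stated $\bigcup_{p}\bigcap_{k}{B_k^p}'$. (For instance, with $p=p_0$ and odd $k$, the hypothesis $p\nmid a_0a_{k-1}$ does not control $a_k$; if $p\nmid a_k$ the polygon of $m!f$ is a single segment of slope $-1/k$ and one obtains only a $k$-cycle, which for odd $k$ is an even permutation.) This appears to be an imprecision in the paper's formulation rather than a gap in your reasoning: the downstream applications in the proofs of Theorems~\ref{MAIN-thm-Sk-probability} and~\ref{Main-thm-Laguerre} use only that for each $k$ there is \emph{some} prime $p_2>k$ with $B_k^{p_2}$, so the weaker quantifier order you actually establish suffices. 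You should either record this discrepancy explicitly or simply prove the $\bigcap_{k}\bigcup_{p}$ version, which is all that is ever used.
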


The proof of Proposition~\ref{prop-no-Ak-ET} is similar to the proof of Proposition~\ref{RE-thm-irreducibility-probability}. We leave the details to the reader.

 \begin{proof}[Proof of Proposition~\ref{MAIN-prop-not-Ak-probability}]
     Applying Proposition~\ref{prop-no-Ak-ET} with $\kappa = \max\{ k_{\max}, \log x\}$ implies Proposition~\ref{MAIN-prop-not-Ak-probability}. 
 \end{proof}

\subsection{Large Galois Group}
For this part we need a deep result from analytic number theory, the existence of primes in short intervals: 
\begin{equation}\label{eq-exist-prime-SI}
    \exists y<p<y+y^{\rho}, \qquad \forall y\gg 1
\end{equation}
for some $\rho>0$. The smaller $\rho$ is the larger the range of uniformity in which our result holds. The state-of-the-art is $\rho=0.525$, due to  Baker, Harman and Pintz \cite{baker2001difference}.

We first prove that if $k$ grows with $x$, then the Galois group contains a $p$-cycle with $\frac{k}{2}<p<k-2$.

\begin{prop}\label{prop-largek-Ak}
    Let $\kappa_1\leq \kappa_2$ be functions of $x$ such that $\displaystyle\lim_{x\to \infty}\kappa_1=\infty$ and $\kappa_2^{7+\epsilon} = O(x)$, for some $0.425>\epsilon>0$. Let $\Gamma_{k}^p$ be the event that there exist two indices $j_1, j_2 = j_1 +p$ such that 
    for any $f\in [f_{m-k,m}]$ with $p\nmid a_{j_1}a_{j_2}$ we have $p\mid |G_{f}|$. Then 
    \[
        \mathbb{P}\left( \bigcap_{\kappa_1\leq k\leq  \kappa_2}\bigcup_{\frac{k}{2}<p<k-2} \Gamma_k^p\right) = 1-O(\kappa_1^{-\epsilon})
    \]
\end{prop}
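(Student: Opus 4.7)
The plan is to use $p$-adic Newton polygons to exhibit, for each $k$ in the range $[\kappa_1,\kappa_2]$ and with probability $1-O(k^{-1-\epsilon})$ over $m$, a prime $p\in(k/2,k-2)$ together with indices $0\le j_1<j_2=j_1+p\le k$ such that $\NP^{(p)}(f_{m-k,m})$ contains an edge from $(j_1,\ast)$ to $(j_2,\ast)$ of slope $-1/p$. Given such an edge, Lemma~\ref{NP-lem-Tilde-2} propagates it to $\NP^{(p)}(f)$ for every $f\in[f_{m-k,m}]$ with $p\nmid a_{j_1}a_{j_2}$, and Lemma~\ref{NP-lem-Dumas-factorization} produces a factor $g\mid f$ over $\QQ_p$ of degree $p$ whose Newton polygon is a single line of slope $-1/p$. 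Any root $\alpha$ of $g$ satisfies $v_p(\alpha)=1/p$, so $[\QQ_p(\alpha):\QQ_p]$ is divisible by $p$; since $[\QQ_p(\alpha):\QQ_p]\le\deg g=p$ it equals $p$, and the extension is totally ramified. The inertia group therefore acts transitively on the $p$ roots of $g$, forcing $p\mid|G_f|$ and hence $\Gamma_k^p$.

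To produce the Newton polygon edge, I would require $p\mid m-k+j_1$ and $v_p(m-k+j_2)=1$; since $p>k/2$, the integers $m-k+j_1$ and $m-k+j_2$ are then the only multiples of $p$ in $[m-k,m]$. Consequently $v_p((m-k+j)!)$ is constant on $j\in[j_1,j_2-1]$ and increases by exactly $1$ at $j=j_2$. A case check over $j<j_1$, $j_1\le j\le j_2$, and $j>j_2$---exploiting $j_1<p$ and $k-j_2<p$ to rule out further multiples of $p$ in the adjacent ranges---verifies that the segment from $(j_1,-v_p((m-k+j_1)!))$ to $(j_2,-v_p((m-k+j_2)!))$ lies on the lower convex hull of $B^{(p)}(f_{m-k,m})$.

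The probabilistic step is the main obstacle. I would apply the Baker--Harman--Pintz estimate~(\ref{eq-exist-prime-SI}) with $\rho=0.525$ to secure several primes $p_1<\cdots<p_s$ in the short interval $((k+1)/2,(k+1)/2+k^\rho]$; the prime number theorem in short intervals yields $p_i-(k+1)/2=O(i\log k)$. For each $p_i$ the required divisibility pattern---two multiples of $p_i$ in $[m-k,m]$, each with $v_{p_i}=1$---fails with probability $O(i\log k/k)$, the valuation condition contributing only a negligible $O(1/p_i)$. The Chinese remainder theorem gives near-independence of the residues $m\bmod p_1,\ldots,m\bmod p_s$, with CRT error $O(\prod_i p_i/x)$, so the joint failure probability is at most $\prod_i O(i\log k/k)+O(k^s/x)$.

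Choosing $s$ so that $s(1-\rho)>1+\epsilon$ makes the first term $O(k^{-1-\epsilon})$, while the hypothesis $\kappa_2^{7+\epsilon}=O(x)$ keeps the CRT error of the same order throughout the range. A union bound over $k\in[\kappa_1,\kappa_2]$ then gives total error $\sum_{k\ge\kappa_1}O(k^{-1-\epsilon})=O(\kappa_1^{-\epsilon})$. The delicate interplay among $s$, the prime-in-short-interval exponent $\rho$, and the CRT independence error is what dictates both the hypothesis on $\kappa_2$ and the range $\epsilon<0.425$; making this balance work out sharply is the crux of the argument.
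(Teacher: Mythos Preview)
Your approach is essentially the paper's: Newton polygon edges of slope $-1/p$ to force $p\mid|G_f|$, Baker--Harman--Pintz to produce several primes just above $k/2$, and CRT to show the per-prime failures rarely coincide. The paper uses exactly $s=3$ primes in $(k/2,\,k/2+3k^\rho)$ and the bound $s(1-\rho)>1+\epsilon$ with $\rho=0.525$, which is the same condition you reach. Two slips are worth flagging.

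First, the claim ``$p_i-(k+1)/2=O(i\log k)$'' is not delivered by the prime number theorem in short intervals in this range: for bounded $i$ it would amount to prime gaps of size $O(\log k)$, which is far beyond anything known unconditionally. Fortunately you do not use it---your final condition $s(1-\rho)>1+\epsilon$ is exactly what comes from the cruder (and justified) bound $p_i-(k+1)/2\le s\,k^\rho$ obtained by iterating Baker--Harman--Pintz $s$ times, giving per-prime failure probability $O(k^{\rho-1})$. Drop the $O(i\log k)$ claim and use this instead.

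Second, the CRT error is $O(k^{2s}/x)$, not $O(k^s/x)$: controlling the valuation condition $v_{p_i}(m_{p_i})=1$ forces you to work modulo $p_i^2$, so the modulus is $\prod_i p_i^2\asymp k^{2s}$. With $s=3$ this gives CRT error $O(k^6/x)$, and requiring it to be $O(k^{-1-\epsilon})$ is precisely the hypothesis $\kappa_2^{7+\epsilon}=O(x)$. Treating the valuation condition by a separate union bound would cost an additive $O(s/k)$, which is too large to be summable over $k$, so you must fold it into the CRT step as the paper does.
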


\begin{proof}
    Fix $\kappa_1\leq k\leq \kappa_2$ and $\frac{k}{2}<p<\frac{k}{2}+h<k-2$, for some $h$. Let $A = \{ 0\leq i \leq k-p\}\subseteq \mathbb{Z}/p\mathbb{Z}$. So $m \pmod p\in A$ if and only if there are two multiples of $p$ in the interval $[n,m]$ (recall that $n=m-k$). 
    Since $m$ is a uniform integer in $[1,x]$, $m\pmod{p}$ is uniform in $\mathbb{Z}/p\mathbb{Z}$ up to an error term of $O(p/x)$, so we have 
    \[
    \begin{split}
        &\mathbb{P}(\exists  m_p'\neq m_p\in [n,m] : p\mid m_p',m_p) 
        \\
        &\qquad =\frac{|A|}{p} - O\Big(\frac{p}{x}\Big) = 1 - O\Big(\frac{h}{p}+\frac{p}{x}\Big) = 1 - O\Big(\frac{h}{k}+\frac{k}{x}\Big)=1-O\Big(\frac{h}{k}\Big).        
    \end{split}
    \]
    Let $B = \{0\leq i \leq p-1 \}\subseteq \mathbb{Z}/p^2 \mathbb{Z}$. So $m\pmod{p^2}\not\in B$ if and only if $v(m_p)=1$, where $m_p = p\cdot \lfloor m/p\rfloor$ is the largest multiple of $p$ in $[n,m]$. As before $m\pmod{p^2}$ is uniform in $\mathbb{Z}/p^2 \mathbb{Z}$ up to an error term of $O(p^2/x)$. Thus we get that 
    \[
        \mathbb{P} (v_p(m_p)=1) = 1-\frac{|B|}{p^2}-O\Big( \frac{p^2}{x}\Big) = 1- O\Big(\frac{1}{k}+\frac{k^2}{x}\Big)=1-O\Big( \frac{1}{k} \Big).
    \]
    We apply this to three primes $\frac k2<p_1, p_2, p_3<\frac k2+h$. Put $P=(p_1p_2p_3)^2$. Using the Chinese Remainder Theorem and that $m \pmod P$ is uniform in $\mathbb{Z}/P\mathbb{Z}$ up to an error term of $O(P/x)=O(k^6/x)$, we get that 
    \begin{equation}\label{eq-threeprimesprob}
    \begin{split}
        &\mathbb{P}(\exists i: \exists m_{p_i}'<m_{p_i}\in [n,m], p_i\mid m_{p_i}',m_{p_i} \mbox{ and } v_{p_i}(m_{p_i})=1) \\ &\qquad\qquad= 1 -O\Big( \frac{h^3}{k^3}+\frac{k^6}{x}\Big).        
    \end{split}
    \end{equation}
    
    Next we note that if $[n,m]$ contains two multiples $m_p'<m_p$ of $p$ and if $v_p(m_p)=1$, then $\Gamma_k^p$ holds, with $j_1=  m_p'$ and $j_2=m_p$.

    Indeed, put $\nu = v_p(m_p')$. Then the Newton polygon of any $g= m! \cdot f$ with $f\in [f_{n,m}]$ lies above the polygon whose vertices are $\{(0,\nu+1), (m_p',1), (m_p,0), (k,0)\}$. Taking $j_1=m_p'$ and $j_2=m_p$ and $f$ as in the statement, we get that the Newton polygon contains the line whose end points are $\{(m_p',1),(m_p,0)\}$ (here we use that since $m_p'-n<p$, the points coming before $(m_p',1)$ lie above the line with slope $-1/p$ running through $(m_p',1)$). So $f$ has a root whose valuation is $-1/p$, hence the ramification index is divisible by $p$, hence the order of the Galois group is divisible by $p$, as claimed. 
    
    \begin{figure}[h]
        \centering
        \fbox{\includegraphics[width=8cm]{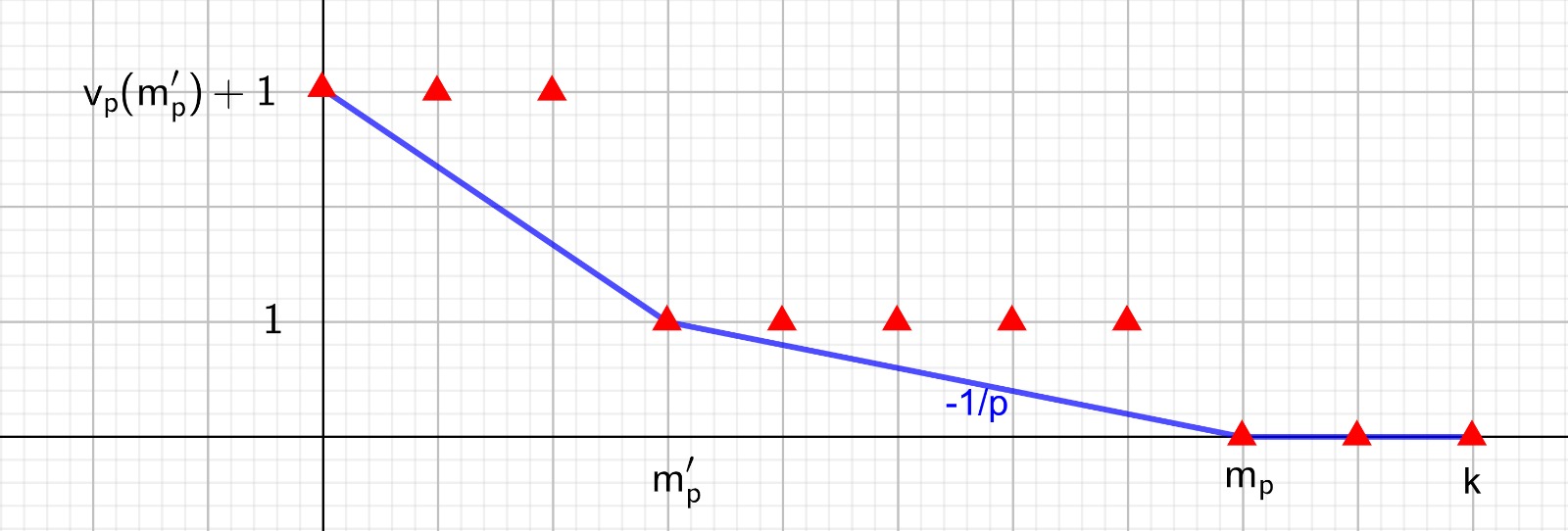}}
        \caption{The Newton polygon of $m!f_{n,m}$}
        \label{fig:NP_in_pf_GG}
    \end{figure}
    
    Take now $h=3k^{\rho}$ with $\rho = 0.525$. 
    By (\ref{eq-exist-prime-SI}), there exist three primes $\frac{k}{2} < p_1<p_2<p_3<\frac{k}{2}+h$. Since $\bigcup_{k/2<p<k-2} \Gamma_k^p$ contains the event in (\ref{eq-threeprimesprob}), it follows that 
    \[
        \mathbb{P}( \bigcap_{k/2<p<k-2}\neg \Gamma_k^p)  = O\Big(\frac{h^3}{k^3}+\frac{k^6}{x}\Big)= O(k^{-1-\epsilon}).
    \]
    By the union bound we have
    \[
        \mathbb{P}\Big(\bigcup_{\kappa_1\leq k \leq \kappa_2} \bigcap_{\frac{k}{2}<p<k-2} \neg \Gamma_p^k \Big) = O\Big( \sum_{\kappa_1<k<\kappa_2}k^{-1-\epsilon}\Big) = O(\kappa_1^{-\epsilon}),
    \]
    as needed.
\end{proof}

Next we deal with small $k$-s.

\begin{prop}
\label{RE-prop-Ak-small}
Let $\kappa_{1}$ and $t$ be function of $x$ such that for every $8\leq k\leq \kappa_1$ the interval $(k-t,k-2)$ contains a prime. For positive integer $k$, and two primes $q,p$, let $\Delta_k^{q,p}$ be the event that  for every $f\in [f_{n,m}]$ with $q\nmid a_{0}a_{p}$ we have $p\mid |G_{f}|$. Then
\[
    \mathbb{P} \Big(\bigcap_{8\leq k\leq \kappa_1} \bigcup_{\frac{k}{2}<p<k-2}\bigcup_{q>k} \Delta_k^{q,p} \Big) =1 -O\Big(\frac{t\log \kappa_1}{\log x} + \frac{t}{\kappa_1} \Big).
\]
\end{prop}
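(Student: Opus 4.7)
The plan is to identify, for each $k \in [8, \kappa_1]$, a prime $p \in (k/2, k-2)$ and an auxiliary prime $q > k$ such that the $q$-adic Newton polygon of $m! f_{n,m}$ has the edge running from $(0,1)$ to $(p,0)$ — an edge of slope $-1/p$ and horizontal length $p$. Then Lemma~\ref{NP-lem-Tilde-2} preserves this edge for any twist $f \in [f_{n,m}]$ satisfying $q \nmid a_0 a_p$, and Lemma~\ref{NP-lem-Eisenstien-large-cycle} applied at the prime $q$ (note $\gcd(1,p) = 1$ and $q \nmid p$ since $q > k > p$) forces $G_f$ to contain an element whose cycle decomposition has a $p$-cycle. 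In particular $p \mid |G_f|$, which is exactly $\Delta_k^{q,p}$.

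First, using the hypothesis, I would pick a prime $p = p_k \in (k-t, k-2)$, which under the mild side condition $t \leq k/2$ lies in $(k/2, k-2)$ as required by the statement. Setting $i := k - p_k \in (2, t)$, we have $n + p_k = m - i$, so producing the auxiliary prime reduces to finding a prime $q > k$ with $v_q(n + p_k) = v_q(m - i) = 1$. This is exactly what Corollary~\ref{SM-cor-unbounded-non-square-probability} supplies: applied with parameters $\kappa_1$ and $t$, it yields, with probability $1 - O(t \log \kappa_1 / \log x + t / \kappa_1)$, primes $q_{i'} > \kappa_1 \geq k$ with $v_{q_{i'}}(m - i') = 1$ for every $i' \in [0, t)$; I take $q := q_i$.

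On this good event, since $q > k$ the interval $[n+1, m]$ (of length $k$) contains at most one multiple of $q$, and $n + p_k$ is such a multiple, hence the unique one. A direct computation of $v_q(b_j) = v_q(m!/(n+j)!) = \sum_{\ell = n+j+1}^{m} v_q(\ell)$ then gives $v_q(b_j) = 1$ for $0 \leq j < p_k$ and $v_q(b_j) = 0$ for $p_k \leq j \leq k$, so the $q$-adic Newton polygon of $m! f_{n,m}$ is exactly $(0,1) \to (p_k, 0) \to (k, 0)$, supplying the required edge. Because the good event from the corollary is a single event serving all $k$ simultaneously, intersecting over $k$ incurs no additional cost and the stated probability bound is inherited directly.

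The main conceptual step — beyond routine Newton-polygon bookkeeping — is recognizing that the auxiliary prime $q$ must be chosen strictly \emph{larger} than $k$, precisely so that the $q$-adic Newton polygon automatically has a segment of horizontal length exactly $p$ (rather than a proper divisor of $p$). This is why the statement of $\Delta_k^{q,p}$ cross-mixes a small prime $p$, governing the eventual cycle length, with a large prime $q$, governing the valuation at which the polygon is read — and why the correct hypothesis on the twist is $q \nmid a_0 a_p$ rather than the more naive-looking $p \nmid a_0 a_p$.
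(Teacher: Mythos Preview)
Your proposal is correct and follows essentially the same approach as the paper's proof: apply Corollary~\ref{SM-cor-unbounded-non-square-probability} once to obtain auxiliary primes $q_i>\kappa_1$ with $v_{q_i}(m-i)=1$ for all $0\le i<t$, then for each $k$ set $i=k-p$ and read off the edge $(0,1)\to(p,0)$ in the $q_i$-adic Newton polygon. The only minor divergence is in ensuring $p>k/2$: you impose the side condition $t\le k/2$, whereas the paper instead takes $p$ to be the \emph{largest} prime in $(k-t,k-2)$ and invokes Bertrand's postulate, which handles all $t$ without an extra hypothesis.
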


\begin{proof}
By Corollary~\ref{SM-cor-unbounded-non-square-probability}, with the required probability, there exist prime numbers $q_0,\ldots, q_{t-1}$ all bigger than $\kappa_1$ such that $v_{q_i}(m-i)=1$. It suffices to show that this implies the event in the assertion. 

Take $p$ to be the largest prime in the interval $(k-t,k-2)$. By Bertrand's postulate, $p>\frac k2$. Let $i=k-p$, so $i<t$. The Newton polygon $\NP^{(q_i)}(f)$ of any $f\in [f_{n,m}]$ with $q_i\nmid a_0a_p$ contains the segment whose endpoints are $\{(0,c+1), (p,c)\}$ for some $c$. Its slope is $-\frac 1p$, which means that one of the roots has valuation $1/p$, so the ramification index is a multiple of $p$, and so $p\mid |G_{f}|$, as needed. 
\end{proof}

\begin{proof}[Proof of Proposition~\ref{MAIN-prop-Ak-probability}]
    Take $\kappa_1 = \log x$, $\kappa_2^{7+\epsilon} = O(x)$ and $t=(\log x)^{0.525}$. If $x$ is sufficiently large, then there always  exists a prime in $(k-t,k-2)$ either by Bertrand postulate if $k$ is small or by (\ref{eq-exist-prime-SI}) if $k$ is large. 
    We apply Propositions~\ref{prop-largek-Ak} and \ref{RE-prop-Ak-small}, and we get that with probability $1-o(1)$, for every $8\leq k\leq \kappa_2$, there exists a prime $\frac{k}{2}<p<k-2$, a prime $q=p$ or $q>k$ and two indices $j_1, j_2=j_1+p$  such that for every $f\in [f_{n,m}]$ with $q\nmid a_{j_1}a_{j_2}$ we have that $p\mid |G_{f}|$. Since we condition on $f$ being irreducible, $G_{f}$ is transitive. Thus $G$ is primitive (cf.\ \cite[Lemma 2.4]{bary2009dirichlet}) and hence by Jordan's theorem \cite{jordan1873limite}, $G \geq A_{k}$.
\end{proof}

\subsection{Extending the Results to Small $k$}
\label{SubSec_small_k}
The results can be extended to $k\leq7$, $k\neq 6$. 

\begin{prop}
\label{RE-prop-k-leq7-neq6}
Choose $m$ uniformly at random in $\{ 1,\ldots,x\}$. Then
\[
\mathbb{P}\left(\forall\ 0 < k \leq 7,\ k\neq 6:G_{f_{n,m}}=S_{k}\right)=1-O\left(\frac{\log \log x}{\log x}\right).
\]
\end{prop}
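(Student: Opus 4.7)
The plan is to handle each $k \in \{1, 2, 3, 4, 5, 7\}$ separately. First I would invoke Propositions~\ref{MAIN-cor-irreducibility-probability} and~\ref{MAIN-prop-not-Ak-probability} with $\kappa = \log x$: these hold for every $k$ and together yield, with probability $1 - O((\log\log x)/\log x)$, that $f_{n,m}$ is irreducible and $G_{f_{n,m}} \neq A_k$.

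For $k \in \{1, 2\}$ irreducibility alone determines $G_{f_{n,m}} = S_k$. For $k = 3$, irreducibility together with $G_{f_{n,m}} \neq A_3$ gives $S_3$, since $A_3$ and $S_3$ are the only transitive subgroups of $S_3$. These cases require no further work.

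For $k \in \{4, 5, 7\}$ I would inspect the transitive subgroups of $S_k$ to identify a small additional cycle forcing $G_{f_{n,m}} \supseteq A_k$. A $3$-cycle suffices for $k = 4$, since the only transitive subgroups of $S_4$ containing a $3$-cycle are $A_4$ and $S_4$. A $3$-cycle also suffices for $k = 5$: degree $5$ is prime, so transitive equals primitive, and among the primitive subgroups of $S_5$ (namely $C_5, D_5, F_{20}, A_5, S_5$) only $A_5$ and $S_5$ have order divisible by $3$. A $5$-cycle suffices for $k = 7$, since the other primitive subgroups $\mathrm{PSL}(3,2), F_{42}, F_{21}, D_7, C_7$ of $S_7$ all have order coprime to $5$.

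To produce the required cycle, I would apply Corollary~\ref{SM-cor-unbounded-non-square-probability} with $\kappa = \log x$ and a fixed constant $t$ (say $t = 8$) to obtain, with probability $1 - O((\log\log x)/\log x)$, primes $p_i > \log x$ with $v_{p_i}(m-i) = 1$ for each $0 \leq i < t$. Then for each $(k, \ell) \in \{(4, 3), (5, 3), (7, 5)\}$ I would take $i = k - \ell$ and consider $\NP^{(p_i)}(f_{n,m})$. Since $p_i > k$, the integer $m - i$ is the unique one among $m, m-1, \ldots, m-(k-1)$ divisible by $p_i$, and a direct computation of $v_{p_i}((n+j)!)$ shows that the Newton polygon consists of a length-$\ell$ segment of slope $-1/\ell$ followed by a horizontal segment. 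Lemma~\ref{NP-lem-Eisenstien-large-cycle} (applied with $s = 1$, $t = \ell$, $p = p_i > \ell$) then produces an $\ell$-cycle in $G_{f_{n,m}}$, completing the proof.

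The main obstacle is the case-by-case classification of transitive subgroups in small degrees, which is routine for $k \neq 6$; the exclusion of $k = 6$ reflects the fact that $S_6$ admits an exceptionally rich collection of transitive subgroups (owing to its outer automorphism), and no combination of a $6$-cycle from Proposition~\ref{MAIN-prop-not-Ak-probability} with a short cycle produced by this Newton-polygon recipe simultaneously excludes all of them.
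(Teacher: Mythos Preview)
Your argument is correct and rests on the same engine as the paper's proof: Corollary~\ref{SM-cor-unbounded-non-square-probability} furnishes primes $p_i>\log x$ with $v_{p_i}(m-i)=1$, and the resulting $p_i$-adic Newton polygons of $m!f_{n,m}$ produce cycles of prescribed lengths via Lemma~\ref{NP-lem-Eisenstien-large-cycle}. The difference is only in the group-theoretic bookkeeping. The paper uses \emph{all} of the $p_i$ at once to obtain a $k$-cycle and a $(k-1)$-cycle (hence $2$-transitivity and $G\not\leq A_k$) together with $\lcm(1,\dots,k)\mid|G|$, and then appeals to a direct check that no proper $2$-transitive subgroup of $S_k$ with $k\leq 7$, $k\neq 6$, satisfies all these constraints. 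You instead recycle the already-proved irreducibility and $G\neq A_k$ results and add a single well-chosen $\ell$-cycle ($\ell=3$ for $k=4,5$ and $\ell=5$ for $k=7$), then classify transitive subgroups case by case. Your route is slightly more economical in the number of invariants used; the paper's is more uniform across the values of $k$.

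One small correction: the statements labelled Propositions~\ref{MAIN-cor-irreducibility-probability} and~\ref{MAIN-prop-not-Ak-probability} are the limiting versions and carry no parameter $\kappa$. What you actually want to invoke are their quantitative forms, Propositions~\ref{RE-thm-irreducibility-probability} and~\ref{prop-no-Ak-ET}, with $\kappa=\log x$; these give exactly the $O((\log\log x)/\log x)$ error you claim.
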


\begin{proof}
Applying Corollary \ref{SM-cor-unbounded-non-square-probability}, with $k=\log x$ and $t=7$, shows that with probability
\[
1-O\left(\frac{\log \log x}{\log x}\right)
\] 
for every $0 \leq i < k$ there exists a prime $p_{i} > 7$ with
\[
v_{p_{i}}(m-i)=1.
\]
The Newton polygons at these primes give a $k$ cycle and a $k-1$ cycle in $G_{f_{n,m}}$. 
In particular $G_{f_{n,m}}$ is 2-transitive, and not contained in $A_{k}$.

Looking on the Newton polygon at the other primes, gives that $k-i$ divides $|G_{f_{n,m}}|$ for $i=0,\ldots, k-1$, hence  $\lcm(1,\ldots,k)\mid |G_{f_{n,m}}|$. 

By direct computation, the only subgroups of $S_k$ ($k\leq 7$, $k\neq 6$) satisfying these conditions are $S_k$ themselves. 
\end{proof}

\begin{rem}
Surprisingly, the proof above fails for $S_6$. 

Indeed, $G=\mathrm{PGL}_2(\mathbb{F}_5)\simeq S_5$ is a $2$-transitive subgroup of $S_{6}$, not contained in $A_{6}$, and of order $5!=120$ which is divisible by $60=\lcm(1,\ldots,6)$. 

Moreover, for every $1 \leq r \leq 6$, there is an element $g_{r} \in G$, whose decomposition to disjoint cycles contains an $r$-cycle.
\end{rem}
\clearpage

\bibliographystyle{plain}

\clearpage

\end{document}